\newcounter{dummy}
\newcommand\myitem[1][]{\item[#1]\refstepcounter{dummy}\def\@currentlabel{#1}}
\let\oldtocsection=\tocsection
\let\oldtocsubsection=\tocsubsection
\let\oldtocsubsubsection=\tocsubsubsection
\renewcommand{\tocsection}[2]{\hspace{0em}\vspace{0.5mm}\oldtocsection{#1}{#2}\vspace{0.5mm}}
\renewcommand{\tocsubsection}[2]{\hspace{2em}\vspace{0.25mm}\oldtocsubsection{#1}{#2}\vspace{0.25mm}}
\renewcommand{\tocsubsubsection}[2]{\hspace{2em}\oldtocsubsubsection{#1}{#2}}
\numberwithin{equation}{section}
\newtheorem{theorem}{Theorem}[section]
\newtheorem{lemma}[theorem]{Lemma}
\newtheorem{proposition}[theorem]{Proposition}
\theoremstyle{definition}
\newtheorem{example}[theorem]{Example}
\newtheorem{remark}[theorem]{Remark}
\newtheorem{definition}[theorem]{Definition}
\renewcommand{\qedsymbol}{{\vrule height5pt width5pt depth1pt}}
\newcommand{\be}{\begin{equation}}
	\newcommand{\ee}{\end{equation}}
\newcommand{\bes}{\begin{equation*}}
	\newcommand{\ees}{\end{equation*}}
\newcommand{\cA}{\mathcal{A}}
\newcommand{\cB}{\mathcal{B}}
\newcommand{\cD}{\mathcal{D}}
\newcommand{\cH}{\mathcal{H}}
\newcommand{\cK}{\mathcal{K}}
\newcommand{\cM}{\mathcal{M}}
\newcommand{\cS}{\mathcal{S}}
\newcommand{\cU}{\mathcal{U}}
\newcommand{\cV}{\mathcal{V}}
\newcommand{\cZ}{\mathcal{Z}}
\newcommand{\bB}{\mathbb{B}}
\newcommand{\bC}{\mathbb{C}}
\newcommand{\bD}{\mathbb{D}}
\newcommand{\bF}{\mathbb{F}}
\newcommand{\bM}{\mathbb{M}}
\newcommand{\bN}{\mathbb{N}}
\newcommand{\bS}{\mathbb{S}}
\newcommand{\bT}{\mathbb{T}}
\newcommand{\bZ}{\mathbb{Z}}
\newcommand{\lip}{\langle}
\newcommand{\rip}{\rangle}
\newcommand{\ip}[1]{\lip #1 \rip}
\newcommand{\bmat}[1]{\begin{bmatrix}
		#1
\end{bmatrix}}
\newcommand{\bsmallmat}[1]{\begin{bsmallmatrix}
		#1
\end{bsmallmatrix}}
\newcommand{\diag}{\operatorname{diag}}
\newcommand{\col}{\operatorname{col}}
\newcommand{\row}{\operatorname{row}}
\newcommand{\iso}{\operatorname{iso}}
\newcommand{\coiso}{\operatorname{coiso}}
\newcommand{\uni}{\operatorname{uni}}
\newcommand{\nc}{\operatorname{nc}}
\newcommand{\fB}{{\mathfrak{B}}}
\newcommand{\fC}{{\mathfrak{C}}}
\newcommand{\fD}{{\mathfrak{D}}}
\newcommand{\foral}{\text{ for all }}
\newcommand{\qand}{\quad\text{and}\quad}
\newcommand{\AND}{\text{ and }}
\newtheorem{mainthm}{Theorem}
\apptocmd{\sloppy}{\hbadness 10000\relax}{}{}
\apptocmd{\sloppy}{\vbadness 10000\relax}{}{}
\begin{document}

\nobreakdash
	
\title[Zeros of Holomorphic Functions as Spectral Data]{Zeros of Holomorphic Functions in Commuting and Non-commuting Variables as Spectral Data}
		
\author{Poornendu Kumar}
\address{Department of Mathematics, University of Manitoba, Winnipeg, Canada}
\email{Poornendu.Kumar@umanitoba.ca}
	
\author{Jeet Sampat}
\address{Department of Mathematics, University of Manitoba, Winnipeg, Canada}
\email{Jeet.Sampat@umanitoba.ca}

\subjclass[2020]{Primary 32A60, 46L52, 47A10, 47A48}

\thanks{This project is supported in part by the Pacific Institute for the Mathematical Sciences}
	
\begin{abstract}
    We characterize the zero sets of functions in the Schur--Agler class over the unit polydisk as well as functions in the unit ball of the multiplier algebra of the Drury--Arveson space via operators associated with a unitary realization formula for these functions. To this end, new notions of `eigenvalues' for tuples of operators are introduced, where the eigenvalues depend on the operator space structure of the ambient domain. Several examples showcasing the properties of these eigenvalues and the zero sets of rational inner functions in the Schur--Agler class are also presented.
    
    We further generalize this result to a large class of non-commuting (NC) holomorphic functions whose ambient domain is given by the unit ball of a matrix of linear polynomials. This includes the NC counterparts of the unit polydisk and the Euclidean unit ball. We also show for functions in the Schur--Agler class over NC matrix unit balls that their zeros along the topological boundary are contained in an appropriately defined `approximate point spectrum' of the associated realization operator, and so are points along the Shilov boundary where the boundary values are not isometric/coisometric. This, in-turn, provides an identical result for the commutative case.

    \vspace{2mm}
    
    \noindent \textbf{Keywords.} Eigenvalues, Matrix unit balls, Non-commutative functions, Realization formulae, Schur--Agler class, Zeros of holomorphic functions.
\end{abstract}
	
\maketitle

\section{Introduction}

\subsection{Motivation}\label{subsec:intro.motivation}
    
Let $\mathcal{S}(\Omega)$ be the collection of all  holomorphic maps from a bounded domain $\Omega \in \bC^d$ ($d \geq 1$) into the \emph{unit disk} $\bD$. One of the first things we learn in a graduate course in complex analysis is that the zero set $\cZ_\bD(f)$ (counting multiplicities) of any given $f \in \cS(\bD)$ satisfies the \emph{Blaschke condition}, which in-turn implies that $\cZ_\bD(f)$ is at most countable. Conversely, given a Blaschke sequence $\underline{\lambda} = \{\lambda_k\}_{k \in \bN} \subset \bD$, one can construct a Blaschke product $B_{\underline{\lambda}} \in \cS(\bD)$ whose zero set is precisely $\underline{\lambda}$. Now, if $f \in \cS(\bD)$ has $\underline{\lambda}$ as its zero set, then we can we factor out the zeros of $f$ via $f = B_{\underline{\lambda}} g$ for some non-vanishing function $g \in \cS(\bD)$. This is a crucial step in analyzing the structure of functions in $\cS(\bD)$ as showcased by Smirnov's factorization theorem (see \cite[Theorem 2.8]{Duren-Book}) and is a cornerstone in the theory of Hardy spaces. Moving beyond $\cS(\bD)$, however, one quickly realizes that the characterization of zero sets is quite complicated. We therefore take the following question as our prime motivation.
\begin{quote}
\textbf{Question.} Can we characterize the zero sets of functions in $\mathcal{S}(\Omega)$?
\end{quote}
In general, this is a difficult problem. There are two approaches one can take: either impose certain geometric conditions on the domains, or work with subalgebras of $\cS(\Omega)$ instead that possess additional function theoretic structure.

We take the latter approach and build a correspondence between zero sets of a subclass of $\cS(\Omega)$ through spectral data of operators associated with this class. Let $\Omega = \bD$ as before but, this time, replace the \emph{Schur class} $\cS(\bD)$ with $\cM(\cD)_1$ -- the unit ball of the multiplier algebra of the \emph{Dirichlet space} $\cD$, which sits comfortably inside the Schur class but is not equal to it. It then follows from the previous discussion that the zero set of every function in $\cM(\cD)_1$ is a Blaschke sequence, but it is not true that every Blaschke sequence corresponds to the zero set of some $f \in \cM(\cD)_1$ (see \cite[Chapter 4]{El-Fallah-Kellay-Mashreghi-Ransford-Dirichlet-Book}). In fact, there is no known necessary as well as sufficient condition for zero sets of functions in $\cM(\cD)_1$. A similar situation arises if we take the \emph{unit polydisk}
\begin{equation*}
    \bD^d := \{ z = (z_1, \dots, z_d) \in \bC^d : |z_j| < 1, \; \forall 1 \leq j \leq d\}.
\end{equation*}
One quickly realizes that analogues of results in the $d = 1$ case hold only for those $f \in \cS(\bD^d)$ such that $\log |f^\dagger| = \operatorname{Re} g$ for some $g \in \operatorname{Hol}(\bD^d)$. For such an $f$, the zero set of $f$ is the same as the zero set of an \emph{inner function} $\varphi \in \cS(\bD^d)$, i.e., $|\varphi^\dagger| = 1$ a.e. (see \cite[Theorem 5.4.1 and 5.4.5]{Rudin-Book-Polydisk}). Here, $\varphi^\dagger$ represents the `boundary' function of $\varphi$ along the \emph{distinguished boundary}
$$\bT^d := \{z \in \bC^d : |z_j| = 1, \, \forall 1 \leq j \leq d\},$$
obtained by taking \emph{radial limits} $\lim_{r \to 1} \varphi(r \lambda)$ for Lebesgue a.e. $\lambda \in \bT^d$. However, characterizing zeros of inner functions is also challenging.

Let us take a step back to the case of $\cS(\bD)$. A remarkable result in function-theoretic operator theory is the existence of a realization formula for $\cS(\bD)$, which states that every $f \in \cS(\bD)$ admits a formula
\begin{equation} \label{eqn:f.realization}
    f(z) = A + zB(I - zD)^{-1}C \foral z \in \bD,
\end{equation}
where
\begin{equation}\label{eqn:block.matrix.rep.for.V}
    V := \bmat{A & B \\	C & D} : \bC \oplus \cH\to \bC \oplus \cH
\end{equation}
can be chosen to be a unitary operator for some Hilbert space $\mathcal{H}$. It is not surprising that any function expressed in the form \eqref{eqn:f.realization} is holomorphic on $\bD$ and belongs to $\cS(\bD)$. This is referred to as a \emph{transfer function realization} -- a term originating from engineering -- or, more simply, as a \emph{realization}. The study of transfer functions extends beyond just understanding zeros, and it is difficult to compile a complete set of resources surrounding this topic. We therefore refer the reader to the books \cite{Agler-McCarthy-Young-Book, Ball-Gohberg-Rodman-book, Rosenbrock-book} for relevant information. In particular, the relationship between zeros of certain $f$ and the realization operator $D$ are known to exist in other settings but we are interested in the Schur class. To be consistent with the rest of this paper, we mention the precise statement we wish the reader to keep in mind throughout this discussion.

\begin{theorem}\label{thm:disk.case}
    If $f \in \cS(\bD)$ and $V = \bsmallmat{A & B \\ C & D}$ are as in \eqref{eqn:f.realization} and \eqref{eqn:block.matrix.rep.for.V}, then
    \begin{equation*}
        \cZ_\bD(f) = \sigma_p(D^*) \cap \bD.
    \end{equation*}
\end{theorem}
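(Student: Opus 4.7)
The plan is to exploit the unitarity of $V$ (equivalently, of $V^*$) by applying it to vectors of the form $\bmat{1 \\ w}$ or $\bmat{0 \\ v}$ and reading off components. First I would establish the central identity: for $z \in \bD \setminus \{0\}$, setting $w := z(I - zD)^{-1}C$ (with $C$ viewed as the vector $C \cdot 1 \in \cH$), the relation $(I - zD)w = zC$ forces $Dw = w/z - C$, and therefore
\[
V \bmat{1 \\ w} \;=\; \bmat{A + Bw \\ C + Dw} \;=\; \bmat{f(z) \\ w/z}.
\]

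For the inclusion $\cZ_\bD(f) \cap \bD \subseteq \sigma_p(D^*) \cap \bD$, I would substitute $f(z) = 0$ into this identity to obtain $V \bmat{1 \\ w} = \bmat{0 \\ v}$ with $v := w/z = (I - zD)^{-1}C$. Applying $V^*$ and invoking $V^*V = I$ yields
\[
\bmat{C^*v \\ D^*v} \;=\; V^* \bmat{0 \\ v} \;=\; \bmat{1 \\ w} \;=\; \bmat{1 \\ zv},
\]
from which $D^*v = zv$ is immediate, with $v \neq 0$ because $C^*v = 1$.

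For the reverse inclusion, I would start with an eigenvector $v \neq 0$ of $D^*$ at eigenvalue $z \in \bD$ and compute $\|V^* \bmat{0 \\ v}\|^2 = \|v\|^2$, which rearranges to $|C^*v|^2 = (1 - |z|^2)\|v\|^2$. This step is the crux of the argument: because $z \in \bD$ we obtain $C^*v \neq 0$, which rules out the otherwise worrisome possibility $v \perp C$ and makes essential use of the openness of the disk. After rescaling so that $C^*v = 1$, reading the same identity in reverse gives $V \bmat{1 \\ zv} = \bmat{0 \\ v}$; the two components force $A + zBv = 0$ and $(I - zD)v = C$, so $v = (I - zD)^{-1}C$ and therefore $f(z) = A + zBv = 0$.

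The one caveat is the endpoint $z = 0$, where the quantity $w/z$ is undefined. I would handle this separately: the unitarity relations $B^*A + D^*C = 0$ and $|A|^2 + \|C\|^2 = 1$ together show that $A = 0$ forces $D^*C = 0$ with $C \neq 0$, and the converse follows from $V^* \bmat{0\\v} = \bmat{1\\0}$ after rescaling an eigenvector of $D^*$ at $0$. I do not foresee any further obstacle---the entire argument is careful bookkeeping of the unitarity identities once the main identity displayed in the first paragraph is in hand.
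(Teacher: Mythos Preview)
Your argument is correct and follows essentially the same route as the paper's proofs of the multivariate generalizations (Theorems~A and~B, which specialize to the disk case when $d=1$): write down the key identity $V\bsmallmat{1\\w}=\bsmallmat{f(z)\\v}$, apply $V^*$, and read off the two components. The only cosmetic differences are that the paper's vector $v^\lambda=C+DL(\lambda)=(I-\lambda D)^{-1}C$ is defined directly (so no separate treatment of $z=0$ is needed), and that the paper shows $C^*v\neq 0$ by the contradiction ``$C^*v=0\Rightarrow D(zv)=v$ with $\|zv\|<\|v\|$'' rather than via your norm identity $|C^*v|^2=(1-|z|^2)\|v\|^2$; both arguments are equivalent here.
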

Here, $\sigma_p(D^*)$ denotes the set of eigenvalues of the operator $D^* \in \cB(\cH)$, and we refer to $D^*$ as the \emph{associated operator} for $f$.

\subsection{Realization formulae in several variables}\label{subsec:intro.realization.in.several.vars}
 The realization formula is a fundamental tool, having inspired a broad spectrum of results across diverse areas of mathematics, including analysis of several complex variables, multivariable operator theory, function theory, operator algebras, and engineering. To mention just a few instances: it facilitates the derivation of the Pick–Nevanlinna interpolation problem \cite{Agler-McCarthy-Young-Book}, proves the commutant lifting theorem \cite{Sarason-TAMS-1967}, yields the Toeplitz corona theorem \cite{Ball-Trent-Vinnikov}, and establishes the Carathéodory approximation result on the disk as well as the bidisk \cite{Alpay-Bhattacharyya-Jindal-Kumar-BLMS}. It has also been applied to factorization results \cite{Bhowmik-Kumar, Brodskiu-1978, Ramlal-Sarkar}, the invariant subspace problem \cite{Brodskiu-1978, Sz.-Nagy-Foias-Book}, operator algebras \cite{Mittal-Paulsen}, the extension of Herglotz integral representation \cite{Agler-McCarthy-Young-Book,Bhowmik-Kumar-II, Pascoe-Her}, and the extension problem for holomorphic maps \cite{Agler-McCarthy-Young-Book}. Beyond pure mathematics, its utility extends to applications such as signal processing \cite{Ball-IEOT-1987}, electrical engineering \cite{Helton-IUMJ-1972}, and linear image processing \cite{Roesser}. In this article, we shall employ the realization formula for the purpose of studying zeros. 
 \subsubsection*{\texorpdfstring{\textbf{For commutative functions}}{For commutative functions}}Two particularly prominent settings are the unit polydisk $\bD^d$ as introduced earlier, and the \emph{Euclidean unit ball} $$\bB_d := \left\{ z \in \bC^d : \sum_{j = 1}^d |z_j|^2 < 1 \right\}.$$

Agler \cite{Agler} made a notable contribution by generalizing the realization formula to $\bD^d$. On the \emph{bidisk} $\bD^2$, the realization formula holds for all functions in $\cS(\bD^2)$, however, for $d > 2$, Agler identified a subclass of $\cS(\bD^d)$ consisting of functions satisfying a von Neumann type inequality for which the realization formula remains valid. This subclass is now known as the {\em Schur--Agler class} on $\bD^d$, which we denote by $\cS\cA(\bD^d)$. Agler proved that $f \in \mathcal{SA}(\bD^d)$ if and only if we can find auxillary Hilbert spaces $\cH_1, \dots, \cH_d$ and a unitary colligation
\begin{equation*}
    V = \bmat{A & B \\ C & D} : \bmat{\bC \\ \oplus_{j = 1}^d \cH_j} \to \bmat{\bC \\ \oplus_{j = 1}^d \cH_j}
\end{equation*}
such that $f$ can be written as
\begin{align}\label{eqn:intro.realization.form.polydisk}
    f(z) = A + B \Delta(z) (I - D \Delta(z))^{-1} C \foral z \in \bD^d,
\end{align}
where $\Delta(z) := z_1 P_1 + \dots + z_d P_d$ and $P_j$ is the orthogonal projection onto $\cH_j$. This framework was later explored over $\bB_d$ by Ball, Trent, and Vinnikov in \cite{Ball-Trent-Vinnikov} where they obtained a realization formula for functions in the unit ball of the multiplier algebra of the Drury--Arveson space (see Section \ref{sec:zeros.hol.func.unit.ball}), which, as in the polydisk case, forms a subclass of $\cS(\bB_d)$. They showed that $f \in \cM(\bB_d)_1$ if and only if there exist an auxiliary Hilbert space $\cH$ and a unitary colligation
$$ V =
\begin{bmatrix}
        A & B \\
        C_1 & D_1 \\
        \vdots & \vdots \\
        C_d & D_d
    \end{bmatrix} :
    \begin{bmatrix}
        \bC \\ \cH
    \end{bmatrix}
    \to
    \begin{bmatrix}
        \bC \\ \cH \otimes \bC^d
    \end{bmatrix}
$$
such that
\begin{equation}\label{eqn:intro.realization.form.unit.ball}
    f(z) = A + B \Bigl(I - \sum_{j=1}^d z_j D_j \Bigr)^{-1} 
    \Bigl(\sum_{j=1}^d z_j C_j \Bigr) \foral z=(z_1, \dots, z_d) \in \bB_d.
\end{equation}

\subsubsection*{\texorpdfstring{\textbf{For non-commutative functions}}{For non-commutative functions}}

In a quest to generalize these formulae to other domains, Ambrozie and Timotin \cite{Ambrozie-Timotin-vonNeumann-inequality} considered the Schur--Agler class over
\begin{equation*}
    \bD_Q := \{ z \in \bC^d : \|Q(z)\| < 1 \},
\end{equation*}
where $Q$ is an $s \times r$ matrix of linear polynomials in $d$ variables. We call these domains  \emph{matrix unit balls}. Note that $\bD_Q = \bD^d$ for $Q(z) = \diag(z_1,\dots,z_d)$, and $\bD_Q = \bB_d$ for $Q(z) = [z_1 \, \dots \, z_d]$. These domains have been explored in greater detail within the modern framework of \emph{free analysis}, where the ambient space $\mathbb{C}^d$ is replaced by the non-commutative (NC) universe $\bM^d$ (defined below), and holomorphic functions are replaced by NC functions that satisfy a mild local boundedness condition. Taylor worked out the functional calculus of non-commuting tuples in the 1970s and introduced certain axioms to define non-commutative functions, as a conceptual generalization of commutative function theory \cite{Taylor-NC-functional-calc, Taylor-NC-functions}. Even though Taylor's work went largely unnoticed at the time, it has now proven to be successful with several works demonstrating its utility in operator theory, free probability, and even systems/control theory \cite{Ball-2006, Helton-Vinnikov, Popescu-JFA-2006, Voiculescu}. Several prominent works such as those of Agler and McCarthy \cite{Agler-mcCarthy-2006, Agler-McCarthy-Young-Book}, Helton, Klep and McCulough \cite{Helton-Klep-McCullough-JMAA, Helton-Klep-McCullough-JFA, Helton-Klep-McCullough-Book-2012}, Kaliuzhnyi-Verbovetskyi and Vinnikov \cite{Vinnikov-Verbovetskyi-book}, Ball, Marx and Vinnikov \cite{Ball-Marx-Vinnikov-JFA-2006, Ball-Marx-Vinnikov-NCTFR}, and Ball and Bolotnikov \cite{Ball-Bolotnikov-NC2007} have shaped this theory into a rich field with several evolving directions.

The fundamentals of NC function theory will be provided in Section \ref{subsec:NC.background}. For the time being, let $\bM^d$ be the graded/disjoint union of $M_{n \times n} \otimes \bC^d$ -- the space of all $d$-tuples of $n \times n$ matrices, and let $Q$ be an $s \times r$ matrix of linear polynomials in $d$ non-commuting variables $Z = (Z_1, \dots, Z_d)$, i.e., $Q(Z) = \sum_{j = 1}^d Q_j Z_j$ for some $Q_j \in M_{s \times r}$, $1 \leq j \leq d$. We then define the NC matrix unit ball
\begin{equation*}
    \bD_Q := \{ X \in \bM^d : \|Q(X)\| < 1 \}.
\end{equation*}
It was shown in \cite{Ball-2006, Ball-Marx-Vinnikov-NCTFR} that the NC analogue of the Schur--Agler class over $\bD_Q$ is the unit ball of $H^\infty(\bD_Q)$ -- the collection of all bounded NC functions $f : \bD_Q \to \bM^1$ (see Section \ref{subsubsec:NC.func}). Moreover, we know from \cite[Remark 2.21 and Corollary 3.2]{Ball-Marx-Vinnikov-NCTFR} that $f \in \cS\cA(\bD_Q) := H^\infty(\bD_Q)_1$ if and only if there is an auxillary Hilbert space $\cH$ and a unitary colligation
\begin{equation*}
    V := \bmat{A & B \\ C & D} : \bmat{\bC \\ \bC^s \otimes \cH} \to \bmat{\bC \\ \bC^r \otimes \cH}
\end{equation*}
such that
\begin{equation}\label{eqn:intro.NC.realization.formula}
    f(X) = A^{(n)} + B^{(n)}[I - (Q(X) \otimes I_\cH) D^{(n)}]^{-1}(Q(X) \otimes I_\cH)C^{(n)}
\end{equation}
for all $X \in \bD_Q \cap (M_{n \times n} \otimes \bC^d)$ and $n \in \bN$, where we use the notation $T^{(n)}= T \otimes I_n$. A different flavor of realization for entire/meromorphic functions in the NC setting have appeared recently in the following works \cite{Augat-Martin-Shamovich-TFR, Martin-Max}.

Over the years many efforts have been made to better understand the Schur--Agler class in commuting and non-commuting variables; see, for example, the following recent works and the references therein \cite{Aleman-Hartz-McCarthy-Richter, Alpay-Bolotnikov-Kaptanouglu, Anderson-Dritschel-Rovnyak, Ball-Bolotnikov-NC2007, Ball-Bolotnikov-Fang-TFR, Ball-Marx-Vinnikov-NCTFR, Barik-Bhattacharjee-Das, Clouatre-Hartz, Clouatre-Kumar, Eschmeier-Putinar, Fang, Greg-IUMJ, Knese-2025, Kojin}. Despite these advances, much about the Schur--Agler classes on general domains remains a mystery. In this paper, we use these realization formulae to provide a new look at the Schur--Agler functions by generalizing Theorem \ref{thm:disk.case} to all the aforementioned cases and exploring their boundary behavior.



\section{Main Results}\label{subsec:main.results}

In what follows, we present our main results, arranged section by section. 
\subsection*{\texorpdfstring{\textbf{Section \ref{sec:commutative case}}}{Section 3}}

In the case $d = 1$, we know that the zero set of a polynomial must be a finite set and is therefore compact. Consequently, since we can dilate any $p \in \bC[z]$ via $z \mapsto p_t(z) := p(tz)$ for some large enough $t$ and rescale it by a factor of $R > 0$ so that $\frac{1}{R}p_t \in \cS(\bD)$, the zero set of $p$ can be captured by eigenvalues of a certain operator using Theorem \ref{thm:disk.case}. The following example shows that this immediately fails for $d > 1$. Let $d = 2$ and $p(z_1,z_2) = (z_1 - \lambda_1)(z_2 - \lambda_2)$ for some $\lambda_1, \lambda_2 \in \bC$. Then, clearly, its zero set is given by $$\{\lambda_1\} \times \bC \; \cup \; \bC \times \{\lambda_2\}.$$ In particular, however much we dilate and rescale $p$ so that it lies in $\cS(\Omega)$ for some bounded domain $\Omega$, we shall not be able to capture all of the zeros of $p$ and, therefore, the zero set of $p$ cannot be contained within a spectrum of some operator in the traditional sense, since spectrums are usually defined as compact sets. We must therefore introduce a new notion of spectrum/eigenvalues that allows for a potentially unbounded set to be a spectrum, or, in this case, a set of eigenvalues to be able to characterize the zero set of $p$ (also see Remark \ref{rem:prop.row.eigenvals} and Example \ref{example:famous.zeros}). This motivates the following definition.

\begin{definition}\label{def:row.eigenvals}
    For any row operator $T = [T_1 \dots T_d] : \cH \otimes \bC^d \to \cH$ on a Hilbert space $\cH$, we say that $\lambda = (\lambda_1, \dots, \lambda_d) \in \bC^d$ is a \emph{row eigenvalue} of $T$ if there exists a non-zero vector $v = [v_1 \dots v_d]^t \in \cH \otimes \bC^d$ such that
    \begin{equation}\label{eqn:temp.row}
        Tv = \lambda v := \sum_{j = 1}^d \lambda_j v_j.
    \end{equation}
    In this case, we say that $v$ is a \emph{row eigenvector} for $T$, and write $\sigma_p^{\row}(T)$ for the collection of all the row eigenvalues of $T$. 
\end{definition}

If $d = 1$, then the row eigenvalues coincide with the eigenvalues of $T$. If $d > 1$, then $\sigma^{\row}_p(T)$ contains the \emph{joint eigenvalues} of $T$, and exhibits additional structural properties that stay in line with the discussion above the definition (see Lemma \ref{lem:eigenvalues-roweigenvalues}). Our first main result characterizes the zeros of functions in $\cM(\bB_d)_1$. 

\begin{mainthm}\label{mainthm:unit.ball}
    Let $f \in \cM(\bB_d)_1$ be a non-constant function admitting a unitary realization formula as in \eqref{eqn:intro.realization.form.unit.ball} with the colligation $V = \bsmallmat{A & B \\ C & D}$, and let $ D^* : \cH \otimes \bC^d \to \cH$ be the associated row operator for $f$. Then,
    $$\cZ_{\bB_d}(f)=\sigma_p^{\operatorname{row}}(D^*) \cap \bB_d.$$
    
\end{mainthm}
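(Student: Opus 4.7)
The plan is to exploit the unitarity of the colligation $V$ by applying it and its adjoint to carefully chosen vectors in $\bC \oplus \cH$ and $\bC \oplus (\cH \otimes \bC^d)$. First, I would write $D = \col(D_1,\dots,D_d)$ and $C = \col(C_1,\dots,C_d)$, abbreviate $Z(z) := \sum_j z_j D_j$ and $C(z) := \sum_j z_j C_j$, and note that $V^*V = I$ forces $\sum_j D_j^* D_j \leq I_\cH$, so $\|Z(\lambda)\| \leq |\lambda| < 1$ for every $\lambda \in \bB_d$; in particular $(I - Z(\lambda))^{-1}$ exists on $\bB_d$ and the realization $f(\lambda) = A + B(I - Z(\lambda))^{-1} C(\lambda)$ is well defined. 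The two identities $V^*V = I$ and $VV^* = I$ will be the source of all non-trivial relations throughout.

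For the forward inclusion $\cZ_{\bB_d}(f) \subseteq \sigma_p^{\row}(D^*) \cap \bB_d$, I would suppose $f(\lambda) = 0$ with $\lambda \in \bB_d$, and set $h := (I - Z(\lambda))^{-1} C(\lambda) \in \cH$, so that $h = Z(\lambda) h + C(\lambda)$ and $A + Bh = 0$. A direct computation then gives $V \bmat{1 \\ h} = \bmat{0 \\ k}$, where $k = \col(k_1,\dots,k_d)$ with $k_j := C_j + D_j h$. Applying $V^*$ and invoking $V^*V = I$ next yields the two equations $\sum_j C_j^* k_j = 1$ and $\sum_j D_j^* k_j = h$. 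The first forces $k \neq 0$, and since $\sum_j \lambda_j k_j = C(\lambda) + Z(\lambda) h = h$, the second reads $D^* k = \sum_j \lambda_j k_j$, certifying $k$ as a row eigenvector of $D^*$ with row eigenvalue $\lambda$.

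For the reverse inclusion, let $\lambda \in \bB_d$ be witnessed by a non-zero $v = \col(v_1,\dots,v_d)$ with $\sum_j D_j^* v_j = \sum_j \lambda_j v_j$; put $h := \sum_j \lambda_j v_j$ and $\alpha := \sum_j C_j^* v_j$, so that $V^* \bmat{0 \\ v} = \bmat{\alpha \\ h}$ and hence $V \bmat{\alpha \\ h} = \bmat{0 \\ v}$ by unitarity. The top block reads $A\alpha + Bh = 0$ and the bottom block reads $v_j = C_j \alpha + D_j h$; multiplying the latter by $\lambda_j$ and summing produces $(I - Z(\lambda)) h = \alpha C(\lambda)$. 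The crucial step, which I expect to be the trickiest part, is ruling out $\alpha = 0$: if it vanished, then invertibility of $I - Z(\lambda)$ would force $h = 0$, and then $v_j = C_j \alpha + D_j h = 0$ for every $j$, contradicting $v \neq 0$. With $\alpha \neq 0$ in hand, dividing $A \alpha + Bh = 0$ by $\alpha$ and substituting $h/\alpha = (I - Z(\lambda))^{-1} C(\lambda)$ yields $f(\lambda) = 0$. Apart from this non-vanishing argument, the proof is essentially bookkeeping with the four relations encoded by $V^*V = VV^* = I$.
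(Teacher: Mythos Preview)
Your argument is correct and tracks the paper's proof almost verbatim: construct the candidate eigenvector from the resolvent data, apply $V$ and $V^*$ alternately, and read off the row-eigenvalue relation in each direction. The only (minor) deviation is in ruling out $\alpha = 0$ for the reverse inclusion --- the paper uses a norm estimate (from $\|D\|\le 1$ and $\|\lambda v\|<\|v\|$ one cannot have $D(\lambda v)=v$), whereas your observation that $\alpha=0$ forces $h=0$ and hence each $v_j=C_j\alpha+D_jh=0$ is a touch more direct.
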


For the unit polydisk, we must employ a different notion of eigenvalues.

\begin{definition}\label{def:diag.eig.values}
    Let $\cH_1, \dots, \cH_d$ be Hilbert spaces and write $\cH = \oplus_{j = 1}^d \cH_j$. We say that $\lambda = (\lambda_1, \dots, \lambda_d) \in \bC^d$ is a \emph{diagonal eigenvalue} for some $T \in \cB(\cH)$ if there exists a non-zero vector $v \in \cH$ (called a \emph{diagoanal eigenvector}) such that
    $$T v = \Delta(\lambda) v.$$
\end{definition}
    
We write $\sigma_p^{\diag}(T)$ for the set of all the diagonal eigenvalues of $T$. Observe that this definition reduces to the standard notion of eigenvalues of the operator $T$ if $d = 1$. This brings us to our next main result.

\begin{mainthm}\label{thm:main:polydisk}
    Let $f \in \cS\cA(\bD^d)$ be a non-constant function admitting a unitary realization formula as in \eqref{eqn:intro.realization.form.polydisk} with a colligation $V = \bsmallmat{A & B \\ C & D}$, and let $D^* \in \cB(\cH)$ be the associated operator on $\cH := \oplus_{j = 1}^d \cH_j$ for $f$. Then, 
    $$\cZ_{\bD^d}(f)=\sigma_p^{\operatorname{diag}}(D^*)\cap\bD^d.$$

\end{mainthm}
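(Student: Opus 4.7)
The plan is to adapt the disk-case argument of Theorem \ref{thm:disk.case} to the polydisk setting, systematically replacing scalar multiplication by $\lambda$ with the operator $\Delta(\lambda) = \lambda_1 P_1 + \dots + \lambda_d P_d \in \cB(\cH)$. This substitution is well-behaved because for $\lambda \in \bD^d$ we have $\|\Delta(\lambda)\| = \max_j |\lambda_j| < 1$, while $\|D\| \leq 1$ by unitarity of $V$, so $\|D\Delta(\lambda)\| < 1$ and $I - D\Delta(\lambda)$ is invertible on $\cH$. With this in hand, both inclusions should follow from block-matrix identities exploiting $VV^* = V^*V = I$.

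For the inclusion $\cZ_{\bD^d}(f) \subseteq \sigma_p^{\diag}(D^*) \cap \bD^d$, I would fix $\lambda \in \cZ_{\bD^d}(f)$ and set $v := (I - D\Delta(\lambda))^{-1}C \in \cH$. Combining the realization \eqref{eqn:intro.realization.form.polydisk} with the resolvent identity $(I - D\Delta(\lambda))v = C$ yields
\begin{equation*}
V \bmat{1 \\ \Delta(\lambda)v} = \bmat{A + B\Delta(\lambda)v \\ C + D\Delta(\lambda)v} = \bmat{f(\lambda) \\ v} = \bmat{0 \\ v}.
\end{equation*}
Applying $V^*$ to both sides and reading off coordinates gives $C^*v = 1$ (so in particular $v \neq 0$) and $D^*v = \Delta(\lambda)v$, exhibiting $\lambda$ as a diagonal eigenvalue. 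Conversely, given $\lambda \in \bD^d$ and a nonzero $v$ satisfying $D^*v = \Delta(\lambda)v$, I would compute $V^* \bsmallmat{0 \\ v} = \bsmallmat{C^*v \\ \Delta(\lambda)v}$ and then apply $V$ to recover $\bsmallmat{0 \\ v}$. The second coordinate of the resulting identity rearranges to $v = (I - D\Delta(\lambda))^{-1}C \cdot (C^*v)$, which forces $C^*v \neq 0$ (otherwise $v = 0$), and the first coordinate then reads $[A + B\Delta(\lambda)(I - D\Delta(\lambda))^{-1}C]\cdot(C^*v) = f(\lambda)\cdot(C^*v) = 0$, so $f(\lambda) = 0$.

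The only conceptual subtlety—and really the reason Definition \ref{def:diag.eig.values} is the right generalization—is that the unitarity argument forces the spectral relation to take the shape $D^*v = \Delta(\lambda)v$ rather than any scalar eigenvalue equation on $D^*$. Once this form is identified, the disk-case bookkeeping transfers almost verbatim; the orthogonal decomposition $\cH = \oplus_{j=1}^d \cH_j$ enters only through the projections $P_j$ defining $\Delta$, so I do not foresee a genuine obstacle beyond recognizing that the diagonal structure must be built into the eigenvalue definition itself.
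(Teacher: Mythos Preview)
Your proposal is correct and follows essentially the same approach as the paper's proof: both directions use the vector $v = (I - D\Delta(\lambda))^{-1}C$ and the block identities arising from applying $V$ and $V^*$ to $\bsmallmat{1 \\ \Delta(\lambda)v}$ and $\bsmallmat{0 \\ v}$. Your justification that $C^*v \neq 0$ via the invertibility of $I - D\Delta(\lambda)$ is a slight rephrasing of the paper's contraction argument ($\|D\Delta(\lambda)\| < 1$), but the content is the same.
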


The proofs of Theorems \ref{mainthm:unit.ball} and \ref{thm:main:polydisk} also show that the \emph{row/diagonal eigenspace}, i.e., the space of all row/diagonal eigenvectors corresponding to some $\lambda \in \cZ(f)$ is one dimensional and the spanning eigenvector is identified through the realization formula as well (see Remarks \ref{rem:row.eigen.sp.1.dim} and \ref{rem:diag.eigen.sp.1.dim}). We close this section with basic properties of diagonal eigenvalues in Lemma \ref{lemma:diag.eigenval.properties}, and some examples involving \emph{rational inner functions} in Examples \ref{example:famous.zeros}--\ref{example:weird.realization}.

\subsection*{\texorpdfstring{\textbf{Section \ref{sec:zeros.hol.func.unit.Free}}}{Section 4}}

For NC functions, we first need to define what is meant by a zero. A couple different notions of zeros exist in this context, but we shall adopt the notion of \emph{determinantal zeros} that appears in \cite{Helton-Klep-Volcic-free-loci} in the context of polynomial factorization, and in \cite{Jury-Martin-Shamovic-Blaschke} in the context of \emph{Blaschke--Singular--Outer factorization} for the NC analogue of the Hardy space. We therefore define the \emph{(determinantal) zero locus} of any $f \in \cS\cA(\bD_Q)$ as
\begin{equation*}
    \cZ_{\bD_Q}(f) := \{X \in \bD_Q : \det f(X) = 0 \}.
\end{equation*}
Now, since these zeros comprise of $d$-tuples of matrices, we need to be quite liberal with our next definition in calling this object the set of \emph{NC $Q$-eigenvalues}.

\begin{definition}\label{intro.def:NC.Q-eigenvalues}
    Let $T \in \cB(\bC^r \otimes \cH, \bC^s \otimes \cH)$ for some Hilbert space $\cH$, and let $Q$ be an $s \times r$ matrix of linear polynomials in $d$ non-commuting variables. We say that $\Lambda \in M_{n \times n} \otimes \bC^d$ is an NC $Q$-eigenvalue at level $n$ if there exists a non-zero vector $\vec{v} \in \bC^r \otimes \cH \otimes \bC^n$ such that
    \begin{equation*}
        T^{(n)}\vec{v} = (Q(\Lambda) \otimes I_\cH) \vec{v}.
    \end{equation*}
    We write $\sigma^Q_p(T^{(n)})$ for the set of all NC $Q$-eigenvalues of $T$ at level $n$, and
    \begin{equation*}
        \sigma^Q_p(T) := \bigsqcup_{n \in \bN} \sigma^Q_p(T^{(n)}).
    \end{equation*}
\end{definition}

It is clear that $\cZ_{\bD_Q}(f)$ and $\sigma_p^Q(T)$ are NC sets, i.e., closed under direct sums. This leads to our third main result and the main theorem of this section.

\begin{mainthm}\label{mainthm:zeros.NC.Schur--Agler.class.realization}
    Let $\bD_Q \subset \bM^d$ be an NC matrix unit ball, let $f \in \cS\cA(\bD_Q)$ be a non-constant NC function admitting a unitary realization formula as in \eqref{eqn:intro.NC.realization.formula} with a colligation $V = \bsmallmat{A & B \\ C & D}$, and let $D^*$ be the associated operator for $f$. Then,
    \begin{equation*}
        \cZ_{\bD_Q}(f) = \sigma^Q_p(D^*) \cap \bD_Q.
    \end{equation*}
\end{mainthm}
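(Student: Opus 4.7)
The plan is to mirror the commutative proofs of Theorems \ref{mainthm:unit.ball} and \ref{thm:main:polydisk}: exploit the unitarity of the colligation $V$ against the realization formula \eqref{eqn:intro.NC.realization.formula} to convert the zero condition at a level-$n$ point into a $Q$-eigenvalue condition, and vice-versa. Concretely, for $X \in \bD_Q \cap (M_{n \times n} \otimes \bC^d)$ I would show that the existence of a non-zero $\xi \in \bC^n$ with $f(X)\xi = 0$ is equivalent to the existence of a non-zero $\vec{u} \in \bC^r \otimes \cH \otimes \bC^n$ satisfying $D^{*(n)}\vec{u} = (Q(X) \otimes I_\cH)\vec{u}$. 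The former is in turn equivalent to $\det f(X) = 0$ (since $f(X)$ is an $n \times n$ matrix), and the latter is by definition the statement that $X \in \sigma^Q_p(D^{*(n)})$, so the theorem would follow.

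For the forward inclusion, starting from $X \in \cZ_{\bD_Q}(f)$ at level $n$ with a witness $\xi$, I would introduce the auxiliary vectors
\begin{align*}
    \vec{w} &:= [I - (Q(X) \otimes I_\cH) D^{(n)}]^{-1}(Q(X) \otimes I_\cH) C^{(n)} \xi, \\
    \vec{u} &:= C^{(n)} \xi + D^{(n)} \vec{w},
\end{align*}
where the inverse exists because $\|Q(X)\| < 1$ and $\|D\| \leq 1$ by unitarity of $V$. A short rearrangement yields $(Q(X) \otimes I_\cH) \vec{u} = \vec{w}$, while $f(X)\xi = 0$ together with the realization formula gives $A^{(n)}\xi + B^{(n)}\vec{w} = 0$. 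These combine into the single block equation $V^{(n)} \bsmallmat{\xi \\ \vec{w}} = \bsmallmat{0 \\ \vec{u}}$, and applying $(V^{(n)})^*$ produces $\xi = C^{*(n)}\vec{u}$ and $\vec{w} = D^{*(n)}\vec{u}$. Combined with $\vec{w} = (Q(X) \otimes I_\cH)\vec{u}$, this is exactly the desired $Q$-eigenvalue relation $D^{*(n)}\vec{u} = (Q(X) \otimes I_\cH)\vec{u}$; the vector $\vec{u}$ is non-zero because $\vec{u} = 0$ would force $\xi = C^{*(n)}(0) = 0$, contradicting the choice of $\xi$.

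The reverse inclusion runs the same chain of identities backwards: given non-zero $\vec{u}$ with $D^{*(n)}\vec{u} = (Q(X) \otimes I_\cH)\vec{u}$, I would set $\vec{w} := D^{*(n)}\vec{u}$ and $\xi := C^{*(n)}\vec{u}$, use unitarity of $V^{(n)}$ to recover $V^{(n)}\bsmallmat{\xi \\ \vec{w}} = \bsmallmat{0 \\ \vec{u}}$, solve the second component for $\vec{w}$ using invertibility of $I - (Q(X) \otimes I_\cH) D^{(n)}$ (valid since $X \in \bD_Q$), and read off $f(X)\xi = 0$ from the first component; non-vanishing of $\xi$ follows because $\xi = 0$ forces $\vec{w} = 0$ and hence $\vec{u} = C^{(n)}\xi + D^{(n)}\vec{w} = 0$.

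The main obstacle I anticipate is purely notational: one must carefully track the tensor-product slots, in particular the implicit identification $\bC^r \otimes \cH \otimes \bC^n \cong (\bC^r \otimes \bC^n) \otimes \cH$ required to apply $Q(X) \otimes I_\cH$, and make sure the amplification $V^{(n)} = V \otimes I_n$ acts on the correct factor when assembling the block identities above. Once this convention is fixed, the argument reduces to the one-line unitarity identity $(V^{(n)})^* V^{(n)} = I$, exactly as in the single-variable Theorem \ref{thm:disk.case}.
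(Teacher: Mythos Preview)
Your proposal is correct and follows essentially the same route as the paper's proof: the auxiliary vectors $\vec{w}$ and $\vec{u}$ are exactly the paper's $L_Q(\Lambda)$ and $\vec{v}$, and both directions hinge on the single block identity $V^{(n)}\bsmallmat{\xi \\ \vec{w}} = \bsmallmat{0 \\ \vec{u}}$ together with unitarity of $V^{(n)}$. The only cosmetic difference is in the reverse direction, where the paper rules out $\xi = C^{(n)*}\vec{u} = 0$ directly via the strict contraction estimate $\|D^{(n)}(Q(X)\otimes I_\cH)\vec{u}\| < \|\vec{u}\|$, while you first solve $\vec{w} = [I - (Q(X)\otimes I_\cH)D^{(n)}]^{-1}(Q(X)\otimes I_\cH)C^{(n)}\xi$ and then observe $\xi = 0 \Rightarrow \vec{w} = 0 \Rightarrow \vec{u} = 0$; these are two phrasings of the same invertibility fact.
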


The proof of this result provides a proof of an analogous result for the commutative matrix unit balls. We record this in Theorem \ref{thm:zeros.commutative.D_Q.SA.class} and close this section.

\subsection*{\texorpdfstring{\textbf{Section \ref{sec:boundary.values.approx.pt.spec}}}{Section 5}}

In the final section, we push our techniques further and capture certain behavior of functions in $\cS\cA(\bD_Q)$ for an NC matrix unit ball $\bD_Q$ with the NC $Q$-spectral data. First, note that if $r \in (0,1)$ and $\Lambda \in \bD_Q$ then $r \Lambda \in \bD_Q$ as well. Moreover, the topological boundary of $\bD_Q$ is given by
\begin{equation*}
    \partial \bD_Q := \{ X \in \bM^d : \|Q(X)\| = 1 \}.
\end{equation*}
We then say that $f \in \cS\cA(\bD_Q)$ has a boundary value at some $\Lambda \in \partial \bD_Q$ if the following limit exists:
\begin{equation*}
    f^\dagger(\Lambda) := \lim_{r \to 1} f(r \Lambda).
\end{equation*}
The \emph{boundary zero locus} of $f \in \cS\cA(\bD_Q)$ is then defined as
$$\cZ_{\partial \bD_Q}(f) := \{\Lambda \in \partial \bD_Q : \det f^\dagger(\Lambda) = 0\}.$$
To capture the boundary zeros, it turns out that we need to consider the \emph{NC $Q$-approximate spectrum} which consists of all $\Lambda \in M_{n \times n} \otimes \bC^n$ such that there exists a sequence $\{\vec{v}_k\}_{k \in \bN}$ of unit vectors for which
\begin{equation*}
    \lim_{k \to \infty} \|T^{(n)} \vec{v}_k - (Q(\Lambda) \otimes I_\cH) \vec{v}_k\| = 0.
\end{equation*}
We write $\sigma_{ap}^Q(T)$ for the NC set of the NC $Q$-approximate eigenvalues of $T$ and arrive at the first main result of this section.

\begin{mainthm}\label{mainthm:boundary.zeros.NC}
    Let $\bD_Q \subset \bM^d$ be an NC matrix unit ball, let $f \in \cS\cA(\bD_Q)$ be a non-constant NC function admitting a unitary realization formula as in \eqref{eqn:intro.NC.realization.formula} with a colligation $V = \bsmallmat{A & B \\ C & D}$, and let $D^*$ be the associated operator for $f$. Then,
    \begin{equation*}
        \cZ_{\partial \bD_Q}(f) \subseteq \sigma^Q_{ap}(D^*) \cap \partial \bD_Q.
    \end{equation*}
\end{mainthm}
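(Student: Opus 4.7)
The plan is to push the realization formula \eqref{eqn:intro.NC.realization.formula} to the boundary via radial dilations and then exploit the unitarity of $V^{(n)}$ to transport the decay of $f(r\Lambda) w$ into an NC $Q$-approximate eigenvalue relation for $D^{(n)*}$. Fix $\Lambda \in \cZ_{\partial \bD_Q}(f)$ at level $n$, and pick a unit vector $w \in \bC^n$ with $f^\dagger(\Lambda) w = 0$, so that $\|f(r\Lambda) w\| \to 0$ as $r \to 1^-$. Since $Q$ is linear we have $Q(r\Lambda) = r Q(\Lambda)$, so for each $r \in (0,1)$ the realization applies at $r\Lambda \in \bD_Q$. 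I would then introduce the auxiliary vectors
\[
    \vec{u}_r := \bigl[I - r(Q(\Lambda) \otimes I_\cH) D^{(n)}\bigr]^{-1} r(Q(\Lambda) \otimes I_\cH) C^{(n)} w, \qquad \vec{v}_r := C^{(n)} w + D^{(n)} \vec{u}_r,
\]
lying in $\bC^s \otimes \cH \otimes \bC^n$ and $\bC^r \otimes \cH \otimes \bC^n$ respectively. A direct manipulation gives $\vec{u}_r = r(Q(\Lambda) \otimes I_\cH) \vec{v}_r$, and the realization formula then assembles into the block identity
\[
    V^{(n)} \bmat{w \\ \vec{u}_r} = \bmat{f(r\Lambda) w \\ \vec{v}_r}.
\]

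Next, I would use the unitarity of $V^{(n)}$ in two complementary ways. Taking adjoints in the block identity and reading off the bottom component yields the crucial relation
\[
    D^{(n)*} \vec{v}_r - r(Q(\Lambda) \otimes I_\cH) \vec{v}_r = -B^{(n)*} f(r\Lambda) w,
\]
whose right hand side has norm at most $\|B\| \cdot \|f(r\Lambda) w\|$, which vanishes as $r \to 1^-$. At the same time, the norm-preserving property of $V^{(n)}$ yields $\|w\|^2 + \|\vec{u}_r\|^2 = \|f(r\Lambda) w\|^2 + \|\vec{v}_r\|^2$, hence $\|\vec{v}_r\|^2 \geq 1 - \|f(r\Lambda) w\|^2$; in particular $\|\vec{v}_r\| \geq \tfrac{1}{2}$ for $r$ sufficiently close to $1$.

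Finally, I would normalize $\tilde v_r := \vec{v}_r / \|\vec{v}_r\|$ and bridge the spurious factor of $r$ using $\|Q(\Lambda)\| = 1$. The triangle inequality then gives
\[
    \|D^{(n)*} \tilde v_r - (Q(\Lambda) \otimes I_\cH) \tilde v_r\| \le \frac{\|B\| \cdot \|f(r\Lambda) w\|}{\|\vec{v}_r\|} + (1 - r),
\]
and both summands tend to $0$ as $r \to 1^-$. Since the $\tilde v_r$ are unit vectors in $\bC^r \otimes \cH \otimes \bC^n$, this places $\Lambda \in \sigma^Q_{ap}(D^*)$ at level $n$; letting $\Lambda$ and $n$ vary delivers the claimed inclusion.

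The main obstacle, and what distinguishes this from the interior result Theorem \ref{mainthm:zeros.NC.Schur--Agler.class.realization}, is controlling $\vec{v}_r$ as $r \to 1^-$. A priori $\|\vec{v}_r\|$ may blow up, since the resolvent $[I - r(Q(\Lambda) \otimes I_\cH) D^{(n)}]^{-1}$ becomes singular in the limit, so one cannot extract a genuine NC $Q$-eigenvector of $D^{(n)*}$ directly; it is only after normalization, justified by the unitarity-derived lower bound on $\|\vec{v}_r\|$, that the approximate eigenvalue equation survives the passage to the boundary. The remaining steps are radial analogues of the interior argument.
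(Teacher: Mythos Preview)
Your argument is correct and follows essentially the same route as the paper's proof: introduce the radial auxiliary vectors $\vec u_r,\vec v_r$, assemble the block identity for $V^{(n)}$, read off the relation $D^{(n)*}\vec v_r - r(Q(\Lambda)\otimes I_\cH)\vec v_r = -B^{(n)*}f(r\Lambda)w$ from the adjoint, bridge the factor $r$ via $\|Q(\Lambda)\|=1$, and normalize. The one cosmetic difference is in how the lower bound on $\|\vec v_r\|$ is obtained: the paper uses the first block of the adjoint identity to show $C^{(n)*}\vec v_r \to w \neq 0$, whereas you use the isometry identity $\|w\|^2+\|\vec u_r\|^2=\|f(r\Lambda)w\|^2+\|\vec v_r\|^2$ directly to get $\|\vec v_r\|^2 \ge 1-\|f(r\Lambda)w\|^2$; both are equally valid and your version is marginally cleaner.
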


It is natural to ask if the above inclusion is always an equality, however, we note that $\sigma_{ap}^Q(D^*)$ captures some additional -- rather complicated data. To this end, we introduce the \emph{isometric} and \emph{coisometric portions} of the boundary of $\bD_Q$:
\begin{align*}
    \partial_{\iso}\bD_Q &:= \{ \Lambda \in \bM^d : Q(\Lambda)^* Q(\Lambda) = I_\cU \};\\
    \partial_{\coiso}\bD_Q &:= \{ \Lambda \in \bM^d : Q(\Lambda) Q(\Lambda)^* = I_\cV \}.
\end{align*}
See Definition \ref{def:isometric.boundary.pt.NC} and Remark \ref{rem:Shilov.vs.essential.boundary} for basic properties of these objects, and their relation to the Shilov boundary of $\bD_Q$. For any $f \in \cS\cA(\bD_Q)$, we also define $BP(f,1)$ as the collection of all boundary points 
$\Lambda \in \partial \bD_Q$ such that $f^\dagger(\Lambda)$ exists but is not an isometry, and similarly $BP^*(f,1)$ for $\Lambda \in \partial \bD_Q$ such that $f^\dagger(\Lambda)$ exists but is not a coisometry.

\begin{mainthm}\label{mainthm:NC.boundary.val.approx.pt.spec}
    Let $\bD_Q \subset \bM^d$ be an NC matrix unit ball, let $f \in \cS\cA(\bD_Q)$ be a non-constant NC function admitting a unitary realization formula as in \eqref{eqn:intro.NC.realization.formula} with a colligation $V = \bsmallmat{A & B \\ C & D}$, and let $D^*$ be the associated operator for $f$. Then,
    \begin{equation*}
        \big(BP(f,1) \cap \partial_{\iso}(\bD_Q)\big) \; \cup \; \big(BP^*(f,1) \cap \partial_{\coiso}\bD_Q\big)  \subseteq \sigma^Q_{ap}(D^*).
    \end{equation*}
\end{mainthm}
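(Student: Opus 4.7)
The plan is to construct approximate eigenvectors for $D^{*(n)}$ with approximate eigenvalue $Q(\Lambda) \otimes I_\cH$ by evaluating the realization formula at the dilates $r\Lambda$ and letting $r \to 1^-$. The isometric and coisometric cases will be treated via $V^{(n)}$ and its adjoint respectively, and in both cases the unitarity of the colligation produces an energy identity that forces a certain auxiliary vector to blow up.

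Concretely, for the isometric case, fix $\Lambda \in BP(f,1) \cap \partial_{\iso}(\bD_Q)$ at level $n$ and a witness $\vec{e} \in \bC^n$ satisfying $\|f^\dagger(\Lambda)\vec{e}\| < \|\vec{e}\|$. For each $r \in (0,1)$, the realization \eqref{eqn:intro.NC.realization.formula} yields natural vectors $u_r, v_r$ with
\[
V^{(n)}\bmat{\vec{e}\\u_r} = \bmat{f(r\Lambda)\vec{e}\\v_r} \qand u_r = r(Q(\Lambda)\otimes I_\cH) v_r.
\]
Unitarity of $V^{(n)}$ combined with the isometry $(Q(\Lambda)\otimes I_\cH)^*(Q(\Lambda)\otimes I_\cH) = I$ gives the energy identity $(1-r^2)\|v_r\|^2 = \|\vec{e}\|^2 - \|f(r\Lambda)\vec{e}\|^2$, whose right-hand side converges to a strictly positive limit as $r \to 1^-$, so $\|v_r\| \to \infty$. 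Applying $V^{*(n)}$ to the same equation and reading its second block produces
\[
D^{*(n)} v_r - (Q(\Lambda)\otimes I_\cH)v_r = (r-1)(Q(\Lambda)\otimes I_\cH)v_r - B^{*(n)} f(r\Lambda)\vec{e},
\]
from which the unit vectors $v_r/\|v_r\|$ supply the required approximate eigenvectors for $D^{*(n)}$ as $r \to 1^-$.

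For the coisometric case, I would first derive a \emph{dual realization} by taking adjoints in \eqref{eqn:intro.NC.realization.formula} and applying the resolvent intertwining $(Q\otimes I)^*[I - rD^*(Q\otimes I)^*]^{-1} = [I - r(Q\otimes I)^*D^*]^{-1}(Q\otimes I)^*$. This produces, for any $\vec{g} \in \bC^n$, the dual identity
\[
V^{*(n)}\bmat{\vec{g}\\u'_r} = \bmat{f(r\Lambda)^*\vec{g}\\v'_r} \qand u'_r = r(Q(\Lambda)\otimes I_\cH)^* v'_r,
\]
with $v'_r = B^{*(n)}\vec{g} + D^{*(n)} u'_r$. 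Choosing $\vec{g}$ with $\|f^\dagger(\Lambda)^*\vec{g}\| < \|\vec{g}\|$ and running the same energy argument --- now using that $(Q(\Lambda)\otimes I_\cH)^*$ is an isometry on $\partial_{\coiso}(\bD_Q)$ --- forces $\|v'_r\| \to \infty$ and hence $\|u'_r\| = r\|v'_r\| \to \infty$. Combining this with the coisometric identity $(Q(\Lambda)\otimes I_\cH) u'_r = r v'_r$ yields
\[
D^{*(n)} u'_r - (Q(\Lambda)\otimes I_\cH) u'_r = (1-r)v'_r - B^{*(n)}\vec{g},
\]
so that $u'_r/\|u'_r\|$ provides the approximate eigenvector. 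In both cases the continuous family is converted to the required sequence by taking $r_k \to 1^-$.

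The main obstacle is the derivation and careful bookkeeping of the dual realization in the coisometric case: one must track the swap of the mapping spaces $\bC^s \otimes \cH$ and $\bC^r \otimes \cH$ when passing from $V$ to $V^*$, and verify that the ampliated adjoint operations are compatible with both the definition of $\sigma^Q_{ap}(D^*)$ at level $n$ and the coisometry condition $(Q(\Lambda)\otimes I_\cH)(Q(\Lambda)\otimes I_\cH)^* = I$. Once the dual realization and its associated energy identity are in place, everything else reduces to standard norm estimates and extracting a subsequence.
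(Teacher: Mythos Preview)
Your proposal is correct and follows essentially the same route as the paper: the isometric case is identical (same vectors $u_r = L_k(\Lambda)$, $v_r = \vec{v}_k$, same energy identity $(1-r^2)\|v_r\|^2 = \|\vec e\|^2 - \|f(r\Lambda)\vec e\|^2$, same use of the second block of $V^{*(n)}$). For the coisometric case the paper works with left/row multiplication by $[y^*\; R_k(\Lambda)]$ against $V^{(n)}$ while you pass to the adjoint realization for $f(r\Lambda)^*$ via $V^{*(n)}$; these are the same argument under Riesz duality, and your explicit use of the coisometry identity $(Q(\Lambda)\otimes I_\cH)u'_r = r v'_r$ is exactly what makes $u'_r/\|u'_r\|$ the approximate eigenvector.
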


Analogous versions of the last two theorems in the commutative case are found and recorded in Section \ref{subsec:boundary.val.comm.func}. We close our discussion by considering extensions of the examples from the commutative case (see Examples \ref{example:general.boundary.val} and \ref{example:weird.boundary.val}). In particular, we note that if $f \in \cS\cA(\bD^d)$ is given by a finite dimensional unitary realization, then
\begin{equation*}
    \sigma^{\diag}_{ap}(D^*) = \sigma^{\diag}_p(D^*) = \{\lambda \in \bC^d : \det(D^* - \Delta(\lambda)) = 0\}.
\end{equation*}

\section{The Commutative Case}\label{sec:commutative case}

It is worth highlighting the commutative case before further generalizations for a few reasons. Firstly, it would help readers unfamiliar with NC function theory and the tools therein to build some intuition about the abstract proofs in the next section. Secondly, we intend to highlight a couple of important distinctions between the Euclidean unit ball $\bB_d$ and the unit polydisk $\bD^d$. Observe that the realization formula \eqref{eqn:intro.realization.form.unit.ball} for functions in $\cM(\bB_d)_1$ must always arise from an infinite dimensional auxillary Hilbert space $\cH$, whereas for functions in $\cS\cA(\bD^d)$ one can construct finite dimensional realizations through unitary matrices. On the other hand, row eigenvalues are more tangible -- in a certain sense, and exhibit general properties that the diagonal eigenvalues do not seem to on a first glance.

\subsection{The Euclidean unit ball}\label{sec:zeros.hol.func.unit.ball}
The \emph{Drury--Arveson space} on $\bB_d$ is defined as
\begin{equation*}
    \cH^2_d := \left\{ f \sim \sum_{\alpha \in \bZ_+^d} c_\alpha z^\alpha : \|f\|_{\cH^2_d} := \sum_{\alpha \in \bZ_+^d} |c_\alpha|^2 \frac{\alpha!}{|\alpha|!} < \infty \right\}.
\end{equation*}
It is not difficult to establish that $\cH^2_d$ is a Hilbert space of holomorphic functions on $\bB_d$ with inner-product derived from the $\|.\|_{\cH^2_d}$ norm. Furthermore, it is a reproducing kernel Hilbert space (RKHS) with the kernel
\begin{equation*}
    K_{\cH^2_d}(z,w) := \frac{1}{1 - \ip{z,w}_{\bC^d}} \foral z,w \in \bB_d.
\end{equation*}
As is the case with many RKHSs, $\cH^2_d$ comes with a rich \emph{multiplier algebra}
\begin{equation*}
    \cM(\bB_d) := \{ \varphi : \bB_d \to \bC : f \in \cH^2_d \implies \varphi f \in \cH^2_d \}.
\end{equation*}
It is a standard fact that $\cM(\bB_d) \subsetneq H^\infty(\bB_d)$ for $d > 1$, and that it turns into a Banach algebra under the norm
\begin{equation*}
    \|\varphi\|_\cM := \|M_\varphi\|_{\cB(\cH^2_d)} \foral \varphi \in \cM(\bB_d),
\end{equation*}
where $M_\varphi \in \cB(\cH^2_d)$ is given by $M_\varphi : f \mapsto \varphi f$. In recent years, there has been a significant development in the study of the Drury--Arveson space and in several different fields. The interested reader is directed to the thorough and well-written survey by Shalit \cite{Shalit-DA-sp-survey}.

Recall from the introduction that $f \in \cM(\bB_d)_1$ -- the unit ball of $\cM(\bB_d)$ if and only if there exists an auxillary Hilbert space $\cH$ and a unitary colligation
\begin{equation*}
   V = \bmat{A & B \\ C_1 & D_1 \\ \vdots & \vdots \\ C_d & D_d} : \bmat{\bC \\ \cH} \to \bmat{\bC \\ \cH \otimes \bC^d}
\end{equation*}
such that
\begin{equation} \label{eqn:realization:ball}
    f(z) = A + B (I - z D)^{-1} (z C ) \foral z \in \bB_d,
\end{equation}
where we use the following notation for convenience:
\begin{equation*}
    z C := \sum_{j = 1}^d z_j C_j \qquad \AND \qquad z D := \sum_{j = 1}^d z_j D_j.
\end{equation*}

\subsubsection*{\texorpdfstring{\textbf{Proof of Theorem \ref{mainthm:unit.ball}}}{Proof of Theorem A}}

Let $f$ be as in \eqref{eqn:realization:ball} with a unitary colligation $V = \bsmallmat{A & B \\ C & D}$ and let $\lambda \in \cZ_{\bB_d}(f)$ be given. We introduce the vectors
    \begin{equation*}
        L(\lambda) := (I - \lambda D)^{-1} (\lambda C) \in \cH\qand v^\lambda := C + DL(\lambda) \in \cH \otimes \bC^d,
    \end{equation*}
    and note that
    \begin{align}
        \lambda v^\lambda &= \lambda C + \lambda D L(\lambda)) \nonumber \\
        &= \lambda C + \lambda D [ (I - \lambda D)^{-1} (\lambda C)] \nonumber \\
        &= [I + \lambda D (I - \lambda D)^{-1}] (\lambda C) \nonumber \\
        &= L(\lambda). \label{eqn:calculation.L(lambda)} 
    \end{align}
    It follows that
    \begin{equation}\label{eqn:zeros.ball.proof.1}
        \bmat{A & B \\ C & D} \bmat{1 \\ L(\lambda)} = \bmat{f(\lambda) \\ C + D L(\lambda)} = \bmat{0 \\ v^\lambda}.
    \end{equation}
    Applying $V^*$ to the left on both sides of \eqref{eqn:zeros.ball.proof.1} gives us
    \begin{align*}
        \bmat{1 \\ L(\lambda)}= \bmat{A^* & C^* \\ B^* & D^*} \bmat{0 \\ v^\lambda} = \bmat{C^* v^\lambda \\ D^* v^\lambda}.
    \end{align*}
    Comparing the two blocks above and using \eqref{eqn:calculation.L(lambda)} provides these relations: 
    $$C^*v^\lambda = 1 \qquad \AND \qquad D^* v^\lambda = L(\lambda) = \lambda v^\lambda.$$
    Since $C^* v^\lambda = 1$, we get $v^\lambda \neq 0$ and then the second relation implies $\lambda \in \sigma^{\row}_p(D^*)$. 

    Conversely, suppose $\lambda = (\lambda_1, \dots, \lambda_d)\in\sigma_p^{\operatorname{row}}(D^*)\cap\mathbb{B}_d$ and let $v^\lambda \in \cH \otimes \bC^d$ be a non-zero vector such that $D^*v^\lambda=\lambda v^\lambda$. Then, we observe that
    \begin{equation*}
        \bmat{A & B \\ C & D} \bmat{0 \\ v^\lambda} = \bmat{C^* v^\lambda \\ D^* v^\lambda} = \bmat{C^* v^\lambda \\ \lambda v^\lambda}.
    \end{equation*}
    Applying $V$ to the left on both sides above gives us 
    \begin{equation*}
        \bmat{0 \\ v^\lambda} = \bmat{A & B \\ C & D} \bmat{C^* v^\lambda \\ \lambda v^\lambda} = \bmat{(C^* v^\lambda) A + B \lambda v^\lambda \\ (C^* v^\lambda) C  + D \lambda v^\lambda}.
    \end{equation*}
    Comparing both the blocks above gives us the following two relations:
    \begin{equation}\label{eqn:calculation.ball.proof.1}
        (C^* v^\lambda) A + B \lambda v^\lambda=0 \qquad \AND \qquad (C^* v^\lambda) C + D \lambda v^\lambda = v^\lambda.
    \end{equation}
    Note that if $C^*v^\lambda=0$, then the second relation above implies $D \lambda v^\lambda = v^\lambda$, which is not possible since $D$ is a contraction and
    \begin{equation*}
        \|\lambda v^\lambda\| \leq \|\lambda\|_{\operatorname{row}} \|v^\lambda\|_{\operatorname{col}} < \|v^\lambda\|_{\operatorname{col}}.
        \end{equation*}
    We therefore assume WLOG that $C^*v^\lambda=1$, so that \eqref{eqn:calculation.ball.proof.1} becomes 
    \begin{equation*}
        A + B \lambda v^\lambda = 0 \qquad \AND \qquad C + D (\lambda v^\lambda)=v^\lambda.
    \end{equation*}
    It easily follows from a calculation similar to \eqref{eqn:calculation.L(lambda)} and the second relation above that $\lambda v^\lambda = L(\lambda)$. Plugging this into the first relation above, we get 
    \begin{equation*}
        0=A+ B \lambda v^\lambda = A + B L(\lambda) = f(\lambda).
    \end{equation*}
    This implies that $\lambda \in \cZ_{\bB_d}(f)$, as required. \hfill \qedsymbol

\begin{remark}\label{rem:row.eigen.sp.1.dim}
    For each $\lambda \in \cZ_{\bB_d}(f)$, we have inadvertently shown that the row eigenspace corresponding to $\lambda$ is spanned by $v^\lambda := C + D [(I - \lambda D)^{-1}(\lambda C)]$.
\end{remark}

\subsubsection*{\texorpdfstring{\textbf{Properties of row eigenvalues}}{Properties of row eigenvalues}}

Let $T = [T_1 \; \dots \; T_d] : \cH \otimes \bC^d \to \cH$ be a row operator over some Hilbert space $\cH$. Recall that the set $\sigma_p(T)$ of \emph{joint eigenvalues} of $T$ consists of all $\lambda = (\lambda_1, \dots, \lambda_d) \in \bC^d$ such that there exists $0 \neq v \in \cH$ for which $T_j v = \lambda_j v$, for all $j$. Recall also that $T$ is said to be \emph{jointly similar} to another row operator $S = [S_1 \; \dots \; S_d] : \cK \otimes \bC^d \to \cK$ if there exists an invertible operator $P \in \cB(\cH,\cK)$ such that $T_j = P^{-1} S_j P$, for all $j$. We use the notation $T \sim_P S$ to denote joint similarity and write $T = P^{-1} S P$.

\begin{lemma}\label{lem:eigenvalues-roweigenvalues}
    Let $T = [T_1 \dots T_d] : \cH \otimes \bC^d \to \cH$ and $S = [S_1 \dots S_d] : \cK \otimes \bC^d \to \cK$ be row operators on some Hilbert spaces $\cH, \cK$. Then, the following holds:
    \begin{enumerate}
        \item $\sigma_p(T) \subset \sigma^{\row}_p(T)$.
        \item If $\lambda_k \in \sigma_p(T_k)$ for some $1 \leq k \leq d$, then
        \begin{equation*}
            \bC^{k-1}\times\{\lambda\}\times\bC^{d-k} \subset \sigma^{\row}_{p}(T)
        \end{equation*}
        \item More generally, if $(\lambda_{j_1}, \dots, \lambda_{j_k}) \in \sigma^{\row}_p(T_{j_1}, \dots, T_{j_k})$ for some $1 \leq j_l < j_{l + 1} \leq d$ and $1 \leq l < k < d$, then
        \begin{equation*}
            \bC^{j_1 - 1} \times \{\lambda_{j_1}\} \times \bC^{j_2 - j_1 - 1} \times \dots \times \bC^{j_k - j_{k - 1} - 1} \times \{\lambda_{j_k}\} \times \bC^{d - j_k} \subset \sigma^{\row}_p(T).
        \end{equation*}
        \item If $T \sim_P S$, then $\sigma^{\row}_p(T) = \sigma^{\row}_p(S)$.
    \end{enumerate}
\end{lemma}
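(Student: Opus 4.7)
My plan is to prove all four parts by direct construction from the definitions; no deep input is needed. The conceptual point underlying everything is that the row-eigenvalue relation $Tv = \lambda v$ is a single constraint on the vector $\sum_{j} T_j v_j \in \cH$, and it does not constrain the individual components $v_j$ beyond this weighted identity. Consequently, one can set many of the $v_j$'s to zero, which in turn frees the corresponding $\lambda_j$'s to be arbitrary.

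For part (1), I would start from a joint eigenvector $0 \neq w \in \cH$ with $T_j w = \lambda_j w$ for all $j$, and take $v := [w,0,\dots,0]^t \in \cH \otimes \bC^d$. Then $Tv = T_1 w = \lambda_1 w = \lambda v$ in the sense of Definition \ref{def:row.eigenvals}, giving $\lambda \in \sigma_p^{\row}(T)$. For part (2), given $T_k w = \lambda_k w$ with $w \neq 0$, place $w$ in the $k$-th coordinate and zeros elsewhere to obtain $v \in \cH \otimes \bC^d$; then for any $\mu \in \bC^d$ with $\mu_k = \lambda_k$,
\begin{equation*}
Tv \;=\; T_k w \;=\; \lambda_k w \;=\; \mu_k w \;=\; \mu v,
\end{equation*}
so $\mu \in \sigma_p^{\row}(T)$. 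Part (3) is the same strategy extended: if $v = [v_{j_1}, \dots, v_{j_k}]^t$ is a row eigenvector for the subtuple $(T_{j_1},\dots,T_{j_k})$ with row eigenvalue $(\lambda_{j_1},\dots,\lambda_{j_k})$, extend it by zero to $\tilde v \in \cH \otimes \bC^d$, placing $v_{j_l}$ at position $j_l$ and $0$ elsewhere; then $T \tilde v = \sum_l T_{j_l} v_{j_l} = \sum_l \lambda_{j_l} v_{j_l}$, which equals $\mu \tilde v$ for any $\mu \in \bC^d$ that agrees with $\lambda$ on the prescribed coordinates $\{j_1,\dots,j_k\}$ (the other coordinates of $\mu \tilde v$ contribute zero because $\tilde v$ vanishes there).

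For part (4), assume $T \sim_P S$ componentwise, i.e.\ $T_j = P^{-1} S_j P$ for all $j$. Given a row eigenvector $v = [v_1,\dots,v_d]^t$ of $T$ for $\lambda$, set $\tilde v := (P \otimes I_d) v = [Pv_1,\dots,Pv_d]^t$, which is non-zero since $P$ is invertible and $v \neq 0$. A direct computation gives
\begin{equation*}
S \tilde v \;=\; \sum_{j=1}^d S_j (P v_j) \;=\; \sum_{j=1}^d P \, T_j v_j \;=\; P \sum_{j=1}^d \lambda_j v_j \;=\; \sum_{j=1}^d \lambda_j \, P v_j \;=\; \lambda \tilde v,
\end{equation*}
so $\lambda \in \sigma_p^{\row}(S)$. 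The reverse inclusion follows by running the same argument with $P^{-1}$, giving equality.

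I do not anticipate any real obstacle. The only subtle point is verifying that the candidate row eigenvectors are actually non-zero, which holds automatically in (1)--(3) (we start from a non-zero component vector) and in (4) (applying an invertible map). It is also worth noting that parts (1) and (2) are strict specializations of (3), so one could in principle prove only (3) and (4) and derive (1), (2) as corollaries; I would nevertheless state them separately to match the lemma.
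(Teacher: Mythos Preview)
Your proof is correct and follows essentially the same direct-construction approach as the paper. The only cosmetic difference is in part (1): you place the joint eigenvector $w$ in a single coordinate (so only the relation $T_1 w = \lambda_1 w$ is used), whereas the paper sets $v = [w,\dots,w]^t$ and uses all $d$ relations $T_j w = \lambda_j w$; both choices work, and parts (2)--(4) match the paper exactly.
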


\begin{proof}
    Let $T$ and $S$ be as in the hypothesis.
    \begin{enumerate}
        \item Suppose $\lambda = (\lambda_1, \ldots, \lambda_d) \in \sigma_p(T)$ and let $0 \neq w \in \cH$ be such that $T_j w = \lambda_j w$ for all $1 \leq j \leq d$. Set $v=[w \; \dots \; w]^t$ and observe that
        $$Tv= \sum_{j=1}^dT_jw=\sum_{j=1}^d\lambda_jw = \lambda v.$$
        \item Choose $0 \neq v_k \in \cH$ such that $T_k v_k = \lambda_k v_k$. Fix arbitrary scalars $\lambda_j \in \mathbb{C}$ for $1 \leq j \neq k \leq d$, and let $v = [0, \dots, 0, v_k, 0, \dots, 0],$ with $v_k$ in the $k^{\text{th}}$ position. Thus, we get $T v = \lambda_k v_k = \lambda v$.
        \item The proof of this is the same as $(2)$, except we take $0 \neq w = (w_{j_1},\dots,w_{j_k})$ such that $[T_{j_1} \dots T_{j_d}] w = [\lambda_{j_1} \dots \lambda_{j_d}] w$ and define
        \begin{equation*}
            v = [0, \dots, 0, w_{j_1}, 0, \dots, 0, w_{j_d}, 0, \dots, 0]^t \in \cH \otimes \bC^d
        \end{equation*}
        with each $w_{j_l}$ at the $j_l^{\text{th}}$ position. Then, for arbitrarily chosen $\lambda_{j'} \in \bC$ with $1 \leq j' \neq j_l \leq d$ for any $1 \leq l \leq k$, we see that $Tv = \lambda v$.
        \item Let $\lambda \in \bC^d$ and $0 \neq [v_1 \; \dots \; v_j] \in \cH \otimes \bC^d$. Set $w_j = P v_j$ and note that
        \begin{align*}
            (T - \lambda I) v = 0 \iff P^{-1} (S - \lambda I)Pv = 0 \iff (S - \lambda I)w = 0.
        \end{align*}
        The invertibility of $P$ then easily implies the result.\qedhere
    \end{enumerate}
\end{proof}

\begin{remark}\label{rem:prop.row.eigenvals}
    Lemma \ref{lem:eigenvalues-roweigenvalues} $(2)$ shows that $\sigma^{\row}_p(T)$ can be an unbounded set and cannot, in general, lie inside any traditionally defined joint spectrum. In the context of Theorem \ref{mainthm:unit.ball}, if $d = 2$ and $\lambda_1 \in \sigma_p(D_1^*)$ for some $D^* = [D_1^* \; D_2^*]$ associated to a unitary realization of some $f \in \cM(\bB_2)_1$, then $(2)$ above suggests that $f$ `contains' a factor of $(z_1 - \lambda_1)$ or some modification of it to allow the remaining function to lie in $\cM(\bB_2)_1$. This, combined with the generalization in $(3)$ warrants future investigation about the structure of row eigenvalues of the associated operator $D^*$ for any given $f \in \cM(\bB_d)_1$ and its relation to factorizations of $f$.
\end{remark}
    
\subsection{The unit polydisk}\label{sec:zeros.hol.func.unit.[polydisk]}

The \emph{Hardy space} on $\bD^d$ is defined as
\begin{equation*}
    H^2(\bD^d) := \left\{ f \sim \sum_{\alpha \in \bZ_+^d} c_\alpha z^\alpha : \|f\|_{H^2(\bD^d)} = \sum_{\alpha \in \bZ_+^d} |c_\alpha|^2 < \infty \right\}.
\end{equation*}
Just like the Drury--Arveson space, it is straight-forward to check that $H^2(\bD^d)$ is a Hilbert space of holomorphic functions on $\bD^d$ with inner-product derived from the $\|.\|_{H^2(\bD^d)}$ norm. Furthermore, it is a RKHS with the kernel
\begin{equation*}
    K_{H^2(\bD^d)}(z,w) := \prod_{j = 1}^d \frac{1}{1 - \overline{w_j} z_j} \foral z,w \in \bD^d.
\end{equation*}
For more information on Hardy spaces over $\bD^d$ see Rudin's book \cite{Rudin-Book-Polydisk}.

It is straight-forward to check that the multiplier algebra of $H^2(\bD^d)$ is simply $H^\infty(\bD^d)$ -- the collection of all bounded holomorphic maps on $\bD^d$. As noted in the introduction, we need to work with the Schur--Agler class $\cS\cA(\bD^d) \subset \cS(\bD^d)$ (strict containment only if $d > 2$) in order to obtain a realization formula. We saw that $f \in \mathcal{SA}(\bD^d)$ if and only if there exist auxillary Hilbert spaces $\cH_1, \dots, \cH_d$ and a unitary colligation
\begin{equation*}
    V = \bmat{A & B \\ C & D} : \bmat{\bC \\ \oplus_{j = 1}^d \cH_j} \to \bmat{\bC \\ \oplus_{j = 1}^d \cH_j}
\end{equation*}
such that
\begin{equation}\label{eqn:realization.form.polydisk}
    f(z) = A + B \Delta(z) (I - D \Delta(z))^{-1} C \foral z \in \bD^d,
\end{equation}
where $\Delta(z) := z_1 P_1 + \dots + z_d P_d$ and $P_j$ is the orthogonal projection onto $\cH_j$.

\subsubsection*{\texorpdfstring{\textbf{Proof of Theorem \ref{thm:main:polydisk}}}{Proof of Theorem B}}
    
Let $f$ be as in \eqref{eqn:realization.form.polydisk} with a unitary colligation $V = \bsmallmat{A & B \\ C & D}$ and let $\lambda \in \mathcal{Z}_{\mathbb{D}^d}(f)$ be given. We introduce the vector
$$v^\lambda :=\left(I-D\Delta(\lambda)\right)^{-1}C \in \cH := \oplus_{j = 1}^d \cH_j,$$
and note as in the proof of Theorem \ref{mainthm:unit.ball} that
\begin{equation}\label{eqn:polydisk.zero.proof.1}
    \bmat{A & B \\ C  & D }\bmat{1 \\ \Delta(\lambda)v^\lambda}=\bmat{f(\lambda) \\ C + D \Delta(\lambda)v^\lambda}=\bmat{0 \\ v^\lambda}.
\end{equation}
The last block in the second equality above holds since
\begin{alignat*}{2}
    && (I - D\Delta(\lambda))v^\lambda &= C, \\
    \implies && v^\lambda &= C + \Delta(\lambda) v^\lambda.
\end{alignat*}
Applying $V^*$ to both sides of \eqref{eqn:polydisk.zero.proof.1}, we get
$$\bmat{ 1 \\ \Delta(\lambda)v^\lambda}=\bmat{A^* & C^* \\ B^* & D^*}\bmat{ 0 \\ v^\lambda}=\bmat{C^*v^\lambda \\ D^*v^\lambda}.$$
This gives the following two relations: 
$$C^*v^\lambda=1 \qquad \AND \qquad D^*v^\lambda=\Delta(\lambda)v^\lambda.$$
Since $C^* v^\lambda = 1$, we get $v^\lambda \neq 0$ and then the second relation implies $\lambda \in \sigma^{\diag}_p(D^*)$.

Conversely, suppose $\lambda \in \sigma_p^{\operatorname{diag}}(D^*)\cap\bD^d$ and let $v^\lambda\in \cH$ be a non-zero vector such that $D^*v^\lambda=\Delta(\lambda)v^\lambda$.  Observe that 
\begin{align*}
        \begin{bmatrix}
            A^* & C^* \\
            B^* & D^*
        \end{bmatrix}  \begin{bmatrix}
            0 \\
            v^\lambda
        \end{bmatrix}=\begin{bmatrix}
            C^*v^\lambda \\
            D^*v^\lambda
        \end{bmatrix} = \begin{bmatrix}
            C^*v^\lambda \\
            \Delta(\lambda)v^\lambda 
        \end{bmatrix}.
\end{align*}
Applying $V$ to both sides of the above equation, we get 
\begin{equation}\label{eqn:polydisk.proof.1}
    \bmat{ 0 \\ v^\lambda}=\bmat{ A & B \\ C & D} \bmat{C^*v^\lambda \\ \Delta(\lambda)v^\lambda}=\bmat{A(C^*v^\lambda)+B\Delta(\lambda)v^\lambda\\
    C(C^*v^\lambda)+D\Delta(\lambda)v^\lambda}
\end{equation}
As before, if $C^* v^\lambda = 0$ then
\begin{equation*}
    D \Delta(\lambda ) v^\lambda = v^\lambda,
\end{equation*}
which cannot happen since $D$ is a contraction and
\begin{equation*}
    \|\Delta(\lambda)v^\lambda\| \leq \|\lambda\|_\infty\|v^\lambda\| < \|v^\lambda\|.
\end{equation*}
Therefore, we can assume without loss of generality that $C^* v^\lambda = 1$ and rewrite \eqref{eqn:polydisk.proof.1} to obtain the following two relations:
$$A+B\Delta(\lambda)v^\lambda=0 \quad \text { and } \quad C+D\Delta(\lambda)v^\lambda=v^\lambda.$$
Solving the second equation for $v^\lambda$ and substituting it into the first shows that
$$f(\lambda) = A+B\Delta(\lambda)(I-D\Delta(\lambda))^{-1}C = 0.$$
This shows that $\lambda \in \mathcal{Z}_{\mathbb{D}^d}(f)$, as required. \hfill \qedsymbol

\begin{remark}\label{rem:diag.eigen.sp.1.dim}
    For each $\lambda \in \cZ_{\bD^d}(f)$, it follows from the proof above that the diagonal eigenspace corresponding to $\lambda$ is spanned by $v^\lambda := (I - D \Delta(\lambda))^{-1} C$.
\end{remark}

\subsubsection*{\texorpdfstring{\textbf{Properties of diagonal eigenvalues}}{Properties of diagonal eigenvalues}}

Let $\cH_1, \dots, \cH_d$ be Hilbert spaces and write $\cH = \oplus_{j = 1}^d \cH_j$. Even though we cannot say much about the diagonal eigenvalues of a general operator $T \in \cB(\cH)$ as we did in Lemma \ref{lem:eigenvalues-roweigenvalues} for the row eigenvalues, we can make a similar observation if $T$ admits a block upper triangular structure as follows:
\begin{equation}\label{eqn:block.triangle.struct.T}
    T = \bmat{T_{11} & T_{12} & \dots & T_{1d} \\
              0 & T_{22} & \dots & T_{2d} \\
              \vdots & \vdots & \ddots & \vdots \\
              0 & 0 & \dots & T_{dd}} \in \cB\Big(\bigoplus_{j = 1}^d \cH_j\Big)
\end{equation}
This scenario appears in situations where one wishes to use the realization of a given function $f \in \cS\cA(\bD^d)$ and obtain a realization for $f_1, f_2 \in \cS\cA(\bD^d)$ such that $f = f_1 f_2$. See \cite{Sz.-Nagy-Foias-Book} as well as the recent works \cite{Bhowmik-Kumar, Ramlal-Sarkar} for more information on this topic.

\begin{lemma}\label{lemma:diag.eigenval.properties}
    Suppose $T \in \cB(\cH)$ admits a block upper triangular structure as in \eqref{eqn:block.triangle.struct.T} and let $1 \leq k \leq d$ be arbitrary. Define $T^{(k)}$ to be the $k \times k$ top-left sub-block of $T$ consisting of $T_{11}$ through $T_{kk}$ on its main diagonal, and similarly define $T_{(k)}$ to be the bottom-right $k \times k$ sub-block of $T$ consisting of $T_{d-k+1,d-k+1}$ through $T_{dd}$ on its main diagonal. Then,
    \begin{align*}
        \sigma^{\diag}_p(T^{(k)}) \times \bC^{d - k} &\subset \sigma^{\diag}_p(T); \\
        \bC^{d - k} \times \sigma^{\diag}_p(T_{(k)}) &\subset \sigma^{(diag)}_p(T).
    \end{align*}
\end{lemma}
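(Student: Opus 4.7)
The plan is to exploit the block upper triangular structure of $T$: writing $v = (v_1, \dots, v_d) \in \oplus_{j=1}^d \cH_j$, the equation $Tv = \Delta(\lambda) v$ decouples, on account of the vanishing sub-diagonal blocks, into the row equations
\begin{equation*}
    (T_{ii} - \lambda_i I_{\cH_i}) v_i = -\sum_{j = i+1}^d T_{ij} v_j, \qquad 1 \leq i \leq d,
\end{equation*}
which can be attacked one block at a time.

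For the first inclusion, I would begin with a nonzero witness $(v_1, \dots, v_k) \in \oplus_{j=1}^k \cH_j$ for an element $(\lambda_1, \dots, \lambda_k) \in \sigma^{\diag}_p(T^{(k)})$, and extend it by zeros to $v := (v_1, \dots, v_k, 0, \dots, 0)$. For any choice of $(\lambda_{k+1}, \dots, \lambda_d) \in \bC^{d - k}$, the row equations with index $i > k$ are trivially satisfied (both sides vanish because $v_j = 0$ for $j \geq i > k$), while those with index $i \leq k$ collapse to the diagonal eigenvalue equation for $T^{(k)}$, which holds by hypothesis. This immediately yields the first inclusion.

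For the second inclusion, the natural dual approach is back-substitution. Fix a nonzero witness $(v_{d-k+1}, \dots, v_d)$ for an element $(\lambda_{d-k+1}, \dots, \lambda_d) \in \sigma^{\diag}_p(T_{(k)})$, and fix arbitrary $(\lambda_1, \dots, \lambda_{d-k}) \in \bC^{d-k}$. The row equations with index $i \geq d - k + 1$ are already satisfied by hypothesis, so the task reduces to producing $v_1, \dots, v_{d-k}$ satisfying the remaining equations. Solving these successively in the order $i = d-k, d-k-1, \dots, 1$, the right hand side at each step involves only the fixed tail together with already-determined components, so at stage $i$ one needs only select any $v_i$ in the preimage of $-\sum_{j > i} T_{ij} v_j$ under $T_{ii} - \lambda_i I_{\cH_i}$. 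The resulting $v = (v_1, \dots, v_d)$ is nonzero because its tail is, and satisfies $Tv = \Delta(\lambda) v$ row by row by construction.

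The delicate point — and the main expected obstacle — is the solvability of each back-substitution step, which requires the forcing term at stage $i$ to lie in the range of $T_{ii} - \lambda_i I_{\cH_i}$. For $\lambda_i \notin \sigma(T_{ii})$ this operator is invertible and the $v_i$ is uniquely determined; in the spectral case one may need to refine the choice of tail within the typically large $\Delta(\lambda)$-eigenspace of $T_{(k)}$ so that the successive right hand sides land in the required ranges. I expect the authors' proof to either finesse this via an inductive choice of tail or to reduce to the generic invertible case by an elementary argument that leverages that only finitely many $\lambda_i$'s need to be adjusted.
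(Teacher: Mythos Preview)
Your argument for the first inclusion is correct and is exactly the paper's: extend the $T^{(k)}$-eigenvector by zeros in the last $d-k$ slots and use that the lower-left block of $T$ vanishes.

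For the second inclusion the paper offers only the sentence ``The proof for the bottom-right $k\times k$ sub-block $T_{(k)}$ is similar.'' Read literally this would pad the $T_{(k)}$-eigenvector with zeros above, but for an upper triangular $T$ the upper-right block acts nontrivially on the tail, so that move does not produce a diagonal eigenvector. Your back-substitution is the natural repair, and you have correctly isolated the obstruction: at each stage $i\le d-k$ one needs the forcing term to land in $\operatorname{ran}(T_{ii}-\lambda_i I)$. Contrary to your expectation, the paper does not finesse this --- and in fact no argument can, because the second inclusion is \emph{false} as stated. Take $d=2$, $\cH_1=\ell^2$, $\cH_2=\bC$, and
\[
T=\bmat{ S & e_0 \\ 0 & 0},
\]
where $S$ is the unilateral shift and $e_0$ denotes the rank-one map $c\mapsto c\,e_0$. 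Then $T_{(1)}=0$, so $\sigma^{\diag}_p(T_{(1)})=\{0\}$, yet $(0,0)\notin\sigma^{\diag}_p(T)$: the equation $Sv_1+c\,e_0=0$ forces $c=0$ (since $e_0\perp\operatorname{ran}S$) and then $v_1=0$ (since $S$ is injective). The inclusion does hold when every $\cH_j$ is finite-dimensional, since then $\det(T-\Delta(\lambda))=\prod_j\det(T_{jj}-\lambda_j I)$ and the vanishing of any tail factor already kills the full product; this is the situation in all of the paper's concrete examples.
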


\begin{proof}
    Suppose $\lambda^{(k)} \in \sigma^{\diag}_p(T^{(k)})$ with a non-zero vector $v^{(k)} \in \oplus_{j = 1}^{k} \cH_j$ such that
    \begin{equation*}
        T^{(k)} v^{(k)} = \Delta^{(k)}(\lambda^{(k)}) v^{(k)},
    \end{equation*}
    where 
    \begin{equation*}
        \Delta^{(k)}(\lambda^{(k)}) := \lambda^{(k)}_1 P_1 + \dots + \lambda^{(k)}_k P_k.
    \end{equation*}
    Then, write
    \begin{equation*}
        T = \bmat{T^{(k)} & * \\  0 & T_{(d-k)}} \in \cB\big((\oplus_{j = 1}^{k} \cH_j) \oplus (\oplus_{j = k + 1}^{d} \cH_j) \big)
    \end{equation*}
    and pick $\lambda_{j'} \in \bC$ arbitrary for all $k + 1 \leq j' \leq d$. If we define
    \begin{equation*}
        0 \neq v = \bmat{v^{(k)} \\ 0} \in \cH,
    \end{equation*}
    then note that
    \begin{equation*}
        Tv = \bmat{T^{(k)} & * \\ 0 & T_{(d - k)}} \bmat{v^{(k)} \\ 0} = \bmat{T^{(k)} v^{(k)} \\ 0} = \Delta(\lambda) v.
    \end{equation*}
    Thus, $\lambda \in \sigma^{\diag}_p(T)$ as required.
    
    The proof for the bottom-right $k \times k$ sub-block $T_{(k)}$ is similar.
\end{proof}

\subsubsection*{\texorpdfstring{\textbf{Examples: Rational inner functions}}{Examples: Rational inner functions}}

We conclude this section with some examples. Recall from the introduction that a map $\varphi \in \cS(\bD^d)$ is said to be inner if $|\varphi^\dagger| = 1$ Lebesgue a.e. on $\bT^d$. Particularly interesting case of inner functions is when $\varphi$ is also a rational function as they appear in several different areas of math. The recent excellent survey from Knese \cite{Knese-Survey} is a great source of information about rational inner functions.

\begin{example}\label{example:famous.zeros}
    The ``famous example'' is the rational inner function  
    \begin{equation*}
        f(z, w) = \frac{2zw - z - w}{2 - z - w} \in \cS(\bD^2) = \cS\cA(\bD^2).
    \end{equation*}
    Observe that $f$ has a non-removable singularity at the point $(1,1) \in \bT^2$. Using Agler’s approach, we obtain a unitary realization for $f$ as follows:
    $$B = \begin{bmatrix} -\tfrac{1}{\sqrt{2}} & -\tfrac{1}{\sqrt{2}} \end{bmatrix}, \quad C = \begin{bmatrix} \tfrac{1}{\sqrt{2}} \\[6pt] \tfrac{1}{\sqrt{2}} \end{bmatrix}, \quad D = \begin{bmatrix} \tfrac{1}{2} & -\tfrac{1}{2} \\[6pt] -\tfrac{1}{2} & \tfrac{1}{2} \end{bmatrix}.
    $$
    This provides us with a unitary colligation matrix
    $$
    V = 
    \begin{bmatrix}
        0 & B \\[6pt]
        C & D
    \end{bmatrix}
    : \mathbb{C} \oplus \mathbb{C}^2 \,\longrightarrow\, \mathbb{C} \oplus \mathbb{C}^2,$$
    and it is straight-forward to check that this is a realization for $f$. Now, note that
    \begin{align*}
        (\lambda, \mu) \in \sigma^{\diag}_p(D^*) &\iff \operatorname{Ker} [D^* - \Delta(\lambda, \mu)] \neq \{0\} \\
        &\iff \det \bmat{\frac{1}{2} - \lambda & -\frac{1}{2} \\ -\frac{1}{2} & \frac{1}{2} - \mu} = 0 \\
        &\iff 2 \lambda \mu - \lambda - \mu = 0,
    \end{align*}
    which implies that $\sigma^{\diag}_p(D^*)$ is precisely the zero set of the numerator of $f$. In particular, $\sigma^{\diag}_p(D^*)$ captures all zeros of $f$ in $\overline{\bD^2}$ and the singularity at $(1,1)$.
\end{example}

\begin{example}\label{example:general.zeros}
    We can generalize the above example as follows. Let $f \in \cS\cA(\bD^d)$ be a rational inner function. We know from \cite[Theorem 2.9]{Greg-Publ.M} that $f$ has a finite dimensional unitary realization given by a colligation matrix $V = \bsmallmat{A & B \\ C & D} \in \cB(\bC \oplus \bC^N)$, where $N = \sum_{j = 1}^d N_j$. From Theorem \ref{thm:main:polydisk}, we get $\cZ_{\bD^d}(f) = \sigma_p^{\diag}(D^*) \cap \bD^d$ and hence, if we write
    \begin{equation*}
        D = \bmat{D_{11} & \dots & D_{1 d} \\
                  \vdots & \ddots & \vdots \\
                  D_{d1} & \dots & D_{dd}}
    \end{equation*}
    in the $d \times d$ block form with $D_{jk} : \bC^{N_k} \to \bC^{N_j}$, we then note as in Example \ref{example:famous.zeros} that
    \begin{equation*}
    \lambda \in \sigma_p^{\diag}(D^*) \iff \det
    \bmat{D_{11}^* - \lambda_1 I_{N_1} & \dots & D_{d1}^* \\
          \vdots & \ddots & \vdots \\
          D_{1d}^* & \dots & D_{dd}^* - \lambda_d I_{N_d}} = 0.
    \end{equation*}
    Therefore, we obtain the following determinantal representation for zeros of rational inner functions in $\cS\cA(\bD^d)$:
    \begin{equation}\label{eqn:zero.det.repn.rational}
        \cZ_{\bD^d} (f) = \left\{ \lambda \in \bD^d : \det
        \bmat{D_{11}^* - \lambda_1 I_{N_1} & \dots & D_{d1}^* \\
              \vdots & \ddots & \vdots \\
              D_{1d}^* & \dots & D_{dd}^* - \lambda_d I_{N_d}} = 0 \right\}.
    \end{equation}

    It was shown in \cite[Theorem 2.11]{Greg-Publ.M} that the rational inner function
    \begin{equation*}
        f(z_1,z_2,z_3) = \frac{3z_1 z_2 z_3 - z_1 z_2 - z_2 z_3 - z_1 z_3}{3 - z_1 - z_2 - z_3}
    \end{equation*}
    lies in $\cS\cA(\bD^3)$, and that any finite dimensional unitary realization of $f$ as defined in the beginning of this example must have $N \geq 6$. Thus, the polynomial obtained by taking the determinant in the R.H.S. of \eqref{eqn:zero.det.repn.rational} must have degree at least $6$, and so it cannot be the numerator of $f$.

    There is also the question of whether something can be said about the singularity points of a general rational inner function $f$ along $\partial \bD^d$ as we did with the function in Example \ref{example:famous.zeros}. This will be explored in Section \ref{sec:boundary.values.approx.pt.spec}.
\end{example}

In the final example, we showcase how one can use Theorem \ref{thm:main:polydisk} to obtain realization formulae for certain rational inner functions on $\bD^2$. We remark that Theorem \ref{thm:main:polydisk} is not strictly needed for this example but it simplifies the argument by providing appropriate `guesses'.

\begin{example}\label{example:weird.realization}
    Let $\alpha, \beta >0$ be such that $\alpha + \beta = 1$. We explicitly construct a $3 \times 3$ unitary realization for the function
    \begin{equation*}
        f = f_{\alpha,\beta}(z,w) := \frac{zw - \alpha z - \beta w}{1 - \beta z - \alpha w}.
    \end{equation*}
    Note that $f$ is a rational inner function (using \cite[Theorem 5.2.4]{Rudin-Book-Polydisk}) and therefore $f \in \cS(\bD^2)$. It follows from Kummert's theorem \cite{Kummert-1989} that $f$ has a unitary realization with a colligation
    \begin{equation*}
        V = \bmat{0 & b_1 & b_2 \\ c_1 & d_{11} & d_{12} \\ c_2 & d_{21} & d_{22}} \in \cB(\bC \oplus \bC^2).
    \end{equation*}
Let us suppose $B, C, D$ are all real matrices for the time being, where
    \begin{equation*}
        B = \bmat{b_1 & b_2}; \qquad C = \bmat{c_1 \\ c_2}; \qquad D = \bmat{d_{11} & d_{12} \\ d_{21} & d_{22}}.
    \end{equation*}
Note from Example \ref{example:general.zeros} that for $(\lambda, \mu) \in \bD^2$ we must have
    \begin{align*}
        \lambda \mu - \alpha \lambda - \beta \mu = 0 &\iff \det \bmat{d_{11} - \lambda & d_{21} \\ d_{12} & d_{22} - \mu} = 0 \\
        &\iff \lambda \mu - d_{22} \lambda - d_{11} \mu + (d_{11} d_{22} - d_{12} d_{21}) = 0.
    \end{align*}
    We must therefore have $d_{11} = \beta$, $d_{22} = \alpha$, and
    \begin{equation*}
        d_{12} d_{21} = d_{11} d_{22} = \alpha \beta.
    \end{equation*}
    Also, since $V^* V = V V^* = I_3$, we can compute the trace of $V^* V$ and conclude
    \begin{equation*}
        d_{11}^2 + d_{12}^2 + d_{21}^2 + d_{22}^2 = 1 \implies d_{12}^2 + d_{21}^2 = 1 - \alpha^2 - \beta^2 = 2 \alpha \beta.
    \end{equation*}
    We can solve the above two equations for $d_{12}$, $d_{21}$ and note that
    \begin{equation*}
        d_{12} = \pm \sqrt{\alpha \beta} = d_{21}.
    \end{equation*}
    There are only a few cases to check, and it can be verified by using
    \begin{align*}
        b_1^2 = \sqrt{1 - d_{11}^2 - d_{21}^2} &; \quad b_2^2 = \sqrt{1 - d_{12}^2 - d_{22}^2}; \\
        c_1^2 = \sqrt{1 - d_{11}^2 - d_{21}^2} &; \quad c_2^2 = \sqrt{1 - d_{21}^2 - d_{22}^2}
    \end{align*}
    that
    \begin{equation*}
        V = \bmat{0 & \sqrt{\alpha} & \sqrt{\beta} \\ -\sqrt{\alpha} & \beta & -\sqrt{\alpha \beta} \\ -\sqrt{\beta} & -\sqrt{\alpha \beta} & \alpha}
    \end{equation*}
    works as a unitary realization for $f = f_{\alpha, \beta}$.

    Note that this argument also generalizes to the case when $\alpha, \beta \in \bC \setminus \{0\}$ and $|\alpha| + |\beta| = 1$, however some small adjustments need to be made. First, we have
    \begin{equation*}
        f = f_{\alpha, \beta} = \frac{zw - \alpha z - \beta w}{1 - \overline{\beta} z - \overline{\alpha} w},
    \end{equation*}
    and the early calculation with $D^*$ will introduce complex conjugates. We therefore make the following guesses using Theorem \ref{thm:main:polydisk} instead: $d_{11} = \overline{\beta};\; d_{22} = \overline{\alpha}$. The calculation with $d_{12}$ and $d_{21}$ then gives us
    \begin{equation*}
        d_{12} d_{21} = \overline{\alpha \beta}; \qquad |d_{12}|^2 + |d_{21}|^2 = 2 |\alpha \beta|.
    \end{equation*}
    It remains to choose an appropriate square-root of $\overline{\alpha}$ and $\overline{\beta}$ to fix $D$, and obtain
    \begin{equation*}
        V = \bmat{0 & \sqrt{\alpha} & \sqrt{\beta} \\
                  -\sqrt{\alpha} & \overline{\beta} & - \sqrt{\overline{\alpha}} \sqrt{\overline{\beta}} \\
                  -\sqrt{\beta} & -\sqrt{\overline{\alpha}} \sqrt{\overline{\beta}} & \overline{\alpha}},
    \end{equation*}
    where $\sqrt{\alpha} = \overline{\sqrt{\overline{\alpha}}}$ and $\sqrt{\beta} = \overline{\sqrt{\overline{\beta}}}$ are fixed by the choice of $\sqrt{\overline{\alpha}}$ and $\sqrt{\overline{\beta}}$.

    It may be possible to generalize further and consider $\alpha, \beta \in \bC \setminus \{0\}$ such that $|\alpha| + |\beta| < 1$, however there are several choices for $d_{11}$ and $d_{22}$ already which makes the rest of the calculation quite tricky to keep track.
\end{example}

\section{The Non-commutative Case}\label{sec:zeros.hol.func.unit.Free}

\subsection{Background}\label{subsec:NC.background}

Recall that a matrix unit ball $\bD_Q \in \bC^d$ is defined as
\begin{equation*}
    \bD_Q := \{ z \in \bC^d : \|Q(z)\|< 1 \},
\end{equation*}
where $Q$ is an $s \times r$ matrix whose entries are linear polynomials, and the norm $\|Q(z)\|$ is computed under the operator topology on $\cB(\bC^s,\bC^r)$. Clearly, $\bD_Q$ is a non-empty domain in $\bC^d$ with $0 \in \bD_Q$. Note that we recover $\bD^d$ and $\bB_d$ by taking $Q = \diag(z_1, \dots, z_d)$ and $Q = [z_1 \dots z_d]$ respectively. Such domains have been extensively studied in the context of interpolation problems, von Neumann's inequality, and realizations (see, for instance, \cite{Ambrozie-Timotin-vonNeumann-inequality, Ball-Bolotnikov-matrix-unit-ball-1, Ball-Bolotnikov-matrix-unit-ball-2}).

As noted in the introduction, Ambrozie and Timotin \cite{Ambrozie-Timotin-vonNeumann-inequality} introduced what we call the Schur--Agler class over $\bD_Q$, denoted by $\cS\cA(\bD_Q)$, which consists of all $f \in \operatorname{Hol}(\bD_Q)$ such that $\|f(T)\| \leq 1$ whenever $T = (T_1,\dots,T_d)$ is a tuple of operators $T_j \in \cB(\cH)$ such that
\begin{equation*}
    T_j T_k = T_k T_j, \; \forall 1 \leq j,k \leq d \qquad \AND \qquad \|Q(T)\| < 1.
\end{equation*}
Their main result then generalizes Agler's theorem to matrix unit balls and shows that $f \in \cS\cA(\bD_Q)$ if and only if it admits a realization formula of the the form:
\begin{equation}\label{eqn:scalar.D_Q.realization.formula}
    f(z) = A + B[I - (Q(z) \otimes I_\cH)D]^{-1}(Q(z) \otimes I_\cH) C,
\end{equation}
where $\cH$ is an auxillary Hilbert space with a unitary colligation
\begin{equation*}
    V = \bmat{A & B \\ C & D} : \bmat{\bC \\ \bC^s \otimes \cH} \to \bmat{\bC \\ \bC^r \otimes \cH}.
\end{equation*}
Based on the realization formula, we recover $\cS\cA(\bD^d)$ and $\cM(\bB_d)_1$ as the Schur--Agler classes of $\bD_Q = \bD^d$ and $\bD_Q = \bB_d$ respectively. Thus, our analysis from earlier suggests that zeros of a function $f$ as in \eqref{eqn:scalar.D_Q.realization.formula} must be connected to certain eigenvalues of $D^*$ that arise from the operator structure induced by $Q$.

With the theory of non-commutative (NC) functions, one is able to take this notion one step further. A particularly interesting work in this context is a paper of Ball, Marx and Vinnikov \cite{Ball-Marx-Vinnikov-NCTFR} which generalizes Ambrozie and Timotin's techniques to the setting of NC matrix unit balls. We follow closely the notation from \cite{Ball-Marx-Vinnikov-NCTFR} and provide basics of NC function theory for the uninitiated.

\subsubsection{\texorpdfstring{\textbf{NC universe}}{NC universe}}\label{subsubsec:NC.universe}

For $n,m \in \bN$, we write $M_{n \times m}$ for the space of all $n \times m$ matrices with complex entries. For any $d \in \bN$, we then define the \emph{NC universe} $\bM^d$ as the graded union
\begin{equation*}
    \bM^d := \bigsqcup_{n \in \bN} M_{n \times n} \otimes \bC^d,
\end{equation*}
consisting of $d$-tuples $X = (X_1, \dots, X_n)$ of matrices $X_j \in M_{n \times n}$ of fixed but arbitrary size $n \in \bN$.

One might wish to replace $\bC^d$ with any operator space and generalize the above definition. For Hilbert spaces $\cU$ and $\cV$, we define the corresponding \emph{NC operator space} $\cB(\cU,\cV)_{\nc}$ as
\begin{equation*}
    \cB(\cU,\cV)_{\nc} := \bigsqcup_{n \in \bN} \cB(\cU \otimes \bC^n, \cV \otimes \bC^n) = \bigsqcup_{n \in \bN} M_{n \times n} \otimes \cB(\cU, \cV).
\end{equation*}

\subsubsection{\texorpdfstring{\textbf{NC sets and domains}}{NC sets and domains}}\label{subsubsec:NC.set.and.domain}

For any $\Omega \subset \bM^d$ and $n \in \bN$, we say that $\Omega_n := \Omega \cap (M_{n \times n} \otimes \bC^d)$ is the \emph{$n^{\text{th}}$ level} of $\Omega$. $\Omega$ is said to be an \emph{NC set} if it is closed under direct sums, i.e.,
\begin{equation*}
    X \in \Omega_n, Y \in \Omega_m \implies X \oplus Y := \bmat{X & 0 \\ 0 & Y} \in \Omega_{n + m}.
\end{equation*} Each $M_{n \times n} \otimes \bC^d$ is equipped with the supremum norm:
\begin{equation*}
    \|X\|_\infty := \max_{1 \leq j \leq d} \|X_j\|,
\end{equation*}
and we can equip $\bM^d$ with the \emph{disjoint-union topology} (also called the \emph{finite open topology}), i.e., $\Omega \subset \bM^d$ is open if and only if $\Omega_n$ is open for all $n \in \bN$. If $\Omega$ is an NC set that is also open, we say that $\Omega$ is an \emph{NC domain}. For instance, the \emph{NC unit polydisk}
\begin{equation*}
    \fD^d := \{ X \in \bM^d : \|X\|_\infty < 1 \}
\end{equation*}
is an NC domain, and so is the \emph{NC unit row-ball}
\begin{equation*}
    \fB_d := \left\{ X \in \bM^d : \left\|\sum_{j = 1}^d X_j X_j^* \right\| < 1 \right\}.
\end{equation*}

\subsubsection{\texorpdfstring{\textbf{NC functions}}{NC functions}}\label{subsubsec:NC.func}

Let $\cU$ and $\cV$ be Hilbert spaces and let $\Omega \subset \bM^d$ be an NC domain. A map $f : \Omega \to \cB(\cU,\cV)_{\nc}$ is said to be an \emph{NC function} if
\begin{enumerate}
    \item $f$ is graded: $X \in \Omega_n \implies f(X) \in \cB(\cU, \cV) \otimes M_{n \times n}$,
    \item $f$ respects direct sums: $X, Y \in \Omega \implies f(X \oplus Y) = f(X) \oplus f(Y)$, and
    \item $f$ respects joint similarities: $X = (X_1, \dots, X_d) \in \Omega_n$, $S \in GL_n$ and $S^{-1}X S := (S^{-1} X_1 S, \dots, S^{-1} X_d S) \in \Omega_n \implies f(S^{-1} X S) = S^{-1} f(X) S$.
\end{enumerate}

A notable feature of free analysis is that a mild local boundedness condition on NC functions is sufficient to ensure holomorphicity (in an appropriate sense). In fact, since we will only deal with NC functions that are bounded over certain NC domains, it will automatically mean that the function is holomorphic on its domain! The interested reader is diverted to \cite{Vinnikov-Verbovetskyi-book} for more details.

\subsubsection{\texorpdfstring{\textbf{NC matrix unit balls}}{NC matrix unit balls}}\label{subsubsec:NC.mat.unit.ball}

Let $\cU$ and $\cV$ be finite-dimensional Hilbert spaces and let
\begin{equation*}
    \{Q_1,\dots,Q_d\} \subset \cB(\cU,\cV)
\end{equation*}
be linearly independent. Then, consider the \emph{linear NC matrix polynomial} $Q: \bM^d \to \cB(\cU,\cV)_{\nc}$ given by
\begin{equation*}
    Q(X) := \sum_{j = 1}^d Q_j \otimes X_j \foral X \in \bM^d.
\end{equation*}
The \emph{NC matrix unit ball} $\bD_Q$ corresponding to $Q$ is then defined as the NC set
\begin{equation}\label{eqn:NC.mat.unit.ball}
    \bD_Q := \{X \in \bM^d : \|Q(X)\| < 1 \}.
\end{equation}
Here, the norm of $Q(X)$ is computed in the operator topology of $M_{n \times n} \otimes \cB(\cU,\cV)$ if $X \in \Omega_n$. In this paper, we focus only on the case when $\cU, \cV$ are finite-dimensional. If
\begin{equation*}
    \dim \cU = r \qand \dim \cV = s,
\end{equation*}
then $Q$ is an $s \times r$ matrix with entries that are linear NC polynomials in $X_j$'s.

Note that $\fD^d$ and $\fB_d$ are obtained by choosing $Q_{\diag}(X) := \diag(X_1, \dots, X_d)$ and $Q_{\row}(X) := [X_1 \dots X_d]$ respectively. Showcasing a deviation from the commutative case, we also have the \emph{NC unit column ball} $\fC_d$ as the NC matrix unit ball given by $Q_{\col}(X) := \bmat{X_1 \\ \vdots \\ X_d}$.

Clearly, $\bD_Q$ is non-empty since $0 \in \bD_Q$, and that $r X \in \bD_Q$ for each $0 < r < 1$ and $X \in \bD_Q$. It is also straightforward to check that $\bD_Q$ is a bounded NC domain (i.e., $ \sup_{X \in \bD_Q} \|X\|_\infty < \infty$) that is \emph{uniformly open} (see \cite[Proposition 2.6]{Sampat-Shalit-JMAA}). Recall that the uniform open topology on $\bM^d$ is generated by basic NC open balls of the form:
\begin{equation*}
    B_{\nc}(X,r) := \bigsqcup_{m \in \bN} \left\{ Y \in M_{mn \times mn} \otimes \bC^d : \left\| Y - X^{(m)} \right\| < r \right\},
\end{equation*}
where $X \in M_{n \times n} \otimes \bC^d$, $X^{(m)} := \underbrace{X \oplus \dots \oplus X}_{m \text{ times}}$ and $r \in (0,\infty)$. Moreover, the topological boundary of $\bD_Q$ is given by
\begin{equation*}
    \partial \bD_Q = \{X \in \bM^d : \|Q(X)\| = 1 \}.
\end{equation*}

\subsection{NC Schur--Agler class and NC realizations}\label{subsec:bdd.NC.func.and.realization}

Let $\bD_Q$ be as in \eqref{eqn:NC.mat.unit.ball}. Define $H^\infty(\bD_Q)$ to be the space of all bounded NC functions on $\bD_Q$, i.e.,
\begin{equation*}
    H^\infty(\bD_Q) := \left\{ f : \bD_Q \to \bM^1 : f \text{ is NC and} \|f\|_\infty := \sup_{X \in \bD_Q} \|f(X)\| < \infty \right\}.
\end{equation*}
The \emph{NC Schur--Agler class} over $\bD_Q$ is then defined as its unit ball, i.e., $\cS\cA(\bD_Q) := H^\infty(\bD_Q)_1$. As noted earlier, any $f \in H^\infty(\bD_Q)$ is automatically NC holomorphic. In fact, \cite[Theorem 7.21]{Vinnikov-Verbovetskyi-book} shows that it is holomorphic with respect to the uniform open topology and, therefore, has a global NC power-series representation of the form
\begin{equation*}
    f(Z) = \sum_{\alpha \in \bF_d^+} c_\alpha Z^\alpha,
\end{equation*}
where $\bF_d^+$ is the free unital semigroup generated by the \emph{alphabet} $\{1, \dots, d\}$, and a \emph{word} $\alpha = \alpha_1 \dots \alpha_k \in \bF_d^+$ corresponds to an \emph{NC monomial} $Z^\alpha := Z_{\alpha_1} \dots Z_{\alpha_k}$. 

The main result of \cite{Ball-2006} generalizes the commutative version of the realization formula to general NC power-series that converge over $\bD_Q$. We use the following modification from \cite{Ball-Marx-Vinnikov-NCTFR} that works for functions in $H^\infty(\bD_Q)$ as we define. In particular, \cite[Remark 2.21 and Corollary 3.2]{Ball-Marx-Vinnikov-NCTFR} show that $f \in \cS\cA(\bD_Q)$ if and only if there is an auxillary Hilbert space $\cH$ and a unitary colligation
\begin{equation}\label{eqn:NC.colligation}
    V := \bmat{A & B \\ C & D} : \bmat{\bC \\ \cV \otimes \cH} \to \bmat{\bC \\ \cU \otimes \cH}
\end{equation}
such that
\begin{equation}\label{eqn:NC.realization.formula}
    f(X) = A^{(n)} + B^{(n)}[I - (Q(X) \otimes I_\cH) D^{(n)}]^{-1}(Q(X) \otimes I_\cH)C^{(n)}
\end{equation}
for all $X \in {\bD_Q}_n$ and $n \in \bN$.

\subsection{Zeros and NC \texorpdfstring{$Q$}{Q}-eigenvalues}\label{subsec:zeros.and.NC.Q-eigenvalues}

Unlike the commutative case, there are a couple of different notions of zeros for NC functions that appear naturally in different contexts. First, we have the \emph{exact zeros}, i.e., given an NC function $f$ on an NC domain $\Omega \subset \bM^d$ and $n \in \bN$, we say that $X \in \Omega_n$ is an exact zero of $f$ if $f(X) = 0_n$. Such zeros have appeared when considering the isomorphism problem of NC function algebras and their classification (see \cite{Salomon-Shalit-Shamovich-algebras-1, Sampat-Shalit-JFA}). Next, we have the \emph{determinantal zeros}, i.e, $X$ is a determinantal zero of $f$ if $\det f(X) = 0$. Such zeros appeared in \cite{Helton-Klep-Volcic-free-loci} in the context of factorization results for non-commutative polynomials. Somewhat recently this type of zeros appeared in \cite{Jury-Martin-Shamovic-Blaschke}, where the authors proved that any given function in the \emph{full Fock space} or the \emph{non-commutative Hardy space} have a version of the Blaschke--Singular Inner--Outer factorization -- just as in the classical case for $H^2(\bD)$, and that the Blaschke factor captures the zero variety of the given function completely. In this paper, we work with determinantal zeros but reinterpret them as follows.

Given any $f \in H^\infty(\bD_Q)$, the \emph{zero locus} of $f$ in $\bD_Q$ is defined as the NC set
\begin{equation*}
    \cZ_{\bD_Q}(f) := \bigsqcup_{n \in \bN} \{X \in {\bD_Q}_n : f(X)y = 0 \text{ for some } 0 \neq y \in \bC^n\}.
\end{equation*}

Since the zero loci consist of matrices, we must introduce yet another notion of `eigenvalues' to connect with the zeros.

\begin{definition}\label{def:NC.Q-eigenvalues}
    Let $T \in \cB(\cU \otimes \cH, \cV \otimes \cH)$ for some Hilbert spaces $\cH, \cU, \AND \cV$, and let $Q : \bM^d \to \cB(\cU, \cV)_{\nc}$ be an NC map. We say $\Lambda \in M_{n \times n} \otimes \bC^d$ is an \emph{NC $Q$-eigenvalue} (at level $n$) if there is a non-zero vector $\vec{v} \in \cU \otimes \cH \otimes \bC^n$ such that
    \begin{equation*}
        T^{(n)}\vec{v} = (Q(\Lambda) \otimes I_\cH) \vec{v}.
    \end{equation*}
    We denote the set of all NC Q-eigenvalues of $T$ at level $n$ as $\sigma^Q_p(T^{(n)})$ and the set of all NC Q-eigenvalues as
    \begin{equation*}
        \sigma^Q_p(T) := \bigsqcup_{n \in \bN} \sigma^Q_p(T^{(n)}).
    \end{equation*}
\end{definition}

It follows from the definition that $\sigma^Q_p(T)$ is an NC set. We record this below.

\begin{lemma}\label{lemma:nceigenvalue}
  $\sigma^Q_p(T)$ is an NC set.
\end{lemma}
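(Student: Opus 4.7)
The plan is to verify directly that $\sigma^Q_p(T)$ is closed under direct sums, which is the defining property of an NC set. Concretely, given $\Lambda \in \sigma^Q_p(T^{(n)})$ and $\Lambda' \in \sigma^Q_p(T^{(m)})$, I will exhibit an explicit NC $Q$-eigenvector of $T^{(n+m)}$ for the eigenvalue $\Lambda \oplus \Lambda' \in M_{(n+m) \times (n+m)} \otimes \bC^d$ by stacking the original eigenvectors, after making the relevant tensor/direct sum identifications.

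First I would fix the standard identifications of Hilbert spaces
\begin{equation*}
    \cU \otimes \cH \otimes \bC^{n+m} \; \cong \; (\cU \otimes \cH \otimes \bC^n) \oplus (\cU \otimes \cH \otimes \bC^m),
\end{equation*}
and similarly for $\cV$ in place of $\cU$. Under these identifications, $T^{(n+m)} = T \otimes I_{n+m}$ decomposes as $T^{(n)} \oplus T^{(m)}$. Next I would use linearity of $Q$ in $X$: since $Q(X) = \sum_j Q_j \otimes X_j$ and each $Q_j \otimes (\Lambda_j \oplus \Lambda'_j)$ is block-diagonal under the same identification, one obtains
\begin{equation*}
    Q(\Lambda \oplus \Lambda') \otimes I_\cH \; = \; \bigl(Q(\Lambda) \otimes I_\cH\bigr) \; \oplus \; \bigl(Q(\Lambda') \otimes I_\cH\bigr).
\end{equation*}

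Having set this up, pick non-zero vectors $\vec{v} \in \cU \otimes \cH \otimes \bC^n$ and $\vec{v}' \in \cU \otimes \cH \otimes \bC^m$ witnessing the NC $Q$-eigenvalue property for $\Lambda$ and $\Lambda'$ respectively, and form $\vec{w} := \vec{v} \oplus \vec{v}'$, which is non-zero in $\cU \otimes \cH \otimes \bC^{n+m}$. A one-line computation using the two decompositions above then gives
\begin{equation*}
    T^{(n+m)} \vec{w} \; = \; T^{(n)} \vec{v} \oplus T^{(m)} \vec{v}' \; = \; \bigl(Q(\Lambda) \otimes I_\cH\bigr) \vec{v} \oplus \bigl(Q(\Lambda') \otimes I_\cH\bigr) \vec{v}' \; = \; \bigl(Q(\Lambda \oplus \Lambda') \otimes I_\cH\bigr) \vec{w},
\end{equation*}
showing $\Lambda \oplus \Lambda' \in \sigma^Q_p(T^{(n+m)})$, which is exactly the NC set condition.

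There is no genuine obstacle here; the only thing to be careful about is the tensor-product bookkeeping, specifically that tensoring with $\bC^{n+m}$ on the right distributes over the direct sum decomposition of $\bC^{n+m}$ into $\bC^n \oplus \bC^m$ in a way that is compatible with both $T^{(\cdot)}$ and $Q(\cdot)$. Once those identifications are made explicit in the opening paragraph of the proof, the conclusion is immediate.
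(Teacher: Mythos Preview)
Your proposal is correct and is precisely the direct verification the paper has in mind; the paper simply asserts that the lemma ``follows from the definition'' without writing out the details. Your explicit tensor/direct-sum bookkeeping is exactly what is needed, and there is nothing to add.
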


With these definitions in place, we are ready to prove Theorem \ref{mainthm:zeros.NC.Schur--Agler.class.realization}.

\subsubsection*{\texorpdfstring{\textbf{Proof of Theorem \ref{mainthm:zeros.NC.Schur--Agler.class.realization}}}{Proof of Theorem C}}

Suppose $\Lambda \in \cZ_{\bD_Q}(f)_n$ for some $n \in \bN$, and let $0 \neq y \in \bC^n$ be such that $f(\Lambda)y = 0$. We introduce the vector
\begin{equation}\label{eqn:NC.zero.proof.1}
    {L_Q(\Lambda)} := [I - (Q(\Lambda) \otimes I_\cH)D^{(n)}]^{-1}(Q(\Lambda) \otimes I_\cH)C^{(n)}y \in \cV \otimes \cH \otimes \bC^n
\end{equation}
as in the commutative case. Now, we note that
\begin{equation*}
    \bmat{A^{(n)} & B^{(n)} \\ C^{(n)} & D^{(n)}} \bmat{y \\ L_Q(\Lambda)} = \bmat{0 \\ C^{(n)}y + D^{(n)} L_Q(\Lambda)}.
\end{equation*}
We then choose $\vec{v} := C^{(n)}y + D^{(n)}L_Q(\Lambda) \in \cU \otimes \cH \otimes \bC^n$ and apply $V^{(n)*}$ to both sides of \eqref{eqn:NC.zero.proof.1} to obtain
\begin{equation}
    \bmat{y \\ L_Q(\Lambda)} = \bmat{A^{(n)*} & C^{(n)*} \\ B^{(n)*} & D^{(n)*}} \bmat{0 \\ \vec{v}} = \bmat{C^{(n)*}\vec{v} \\ D^{(n)*}\vec{v}}.
\end{equation}
It follows from comparing the first block above that $\vec{v} \neq \vec{0}$. Thus, in order to conclude that $\Lambda \in \sigma^Q_p(D^{(n)*}) \cap {\bD_Q}_n$, it suffices to show that
\begin{equation*}
   L_Q(\Lambda) = (Q(\Lambda) \otimes I_\cH) \vec{v}.
\end{equation*}
The following calculation shows that this holds:
\begin{alignat}{2}\label{eqn:temp.1}
    && [I - (Q(\Lambda) \otimes I_\cH) D^{(n)}] L_Q(\Lambda) &= (Q(\Lambda) \otimes I_\cH) C^{(n)}y \nonumber\\
    \implies && L_Q(\Lambda) - (Q(\Lambda)\otimes I_\cH) D^{(n)} L_Q(\Lambda) &= (Q(\Lambda) \otimes I_\cH) C^{(n)}y \nonumber\\
    \implies && L_Q(\Lambda) - (Q(\Lambda) \otimes I_\cH) (\vec{v} - C^{(n)} y) &= (Q(\Lambda) \otimes I_\cH) C^{(n)}y \nonumber\\
    \implies && L_Q(\Lambda) &= (Q(\Lambda) \otimes I_\cH) (C^{(n)}y + \vec{v} - C^{(n)}y) \nonumber\\
    \implies && L_Q(\Lambda) &= (Q(\Lambda) \otimes I_\cH) \vec{v}.
\end{alignat}

Conversely, let $\Lambda \in \sigma^Q_p(D^{(n)*}) \cap {\bD_Q}_n$ for some $n \in \bN$ and let $\vec{v}^{\Lambda} \in \cU \otimes \cH \otimes \bC^n$ be a non-zero vector so that $D^{(n)*} \vec{v}^{\Lambda} = (Q(\Lambda) \otimes I_\cH) \vec{v}^\Lambda$.
Observe that
\begin{equation*}
    \bmat{A^{(n)*} & C^{(n)*} \\ B^{(n)*} & D^{(n)*}} \bmat{0 \\ \vec{v}^\Lambda} = \bmat{C^{(n)*}\vec{v}^{\Lambda} \\ (Q(\Lambda) \otimes I_\cH) \vec{v}^\Lambda}.
\end{equation*}
Applying $V$ to both side above, we get
\begin{align}\label{eqn:NC.zero.proof.3}
    \bmat{0 \\ \vec{v}^\Lambda} &= \bmat{A^{(n)} & B^{(n)} \\ C^{(n)} & D^{(n)}} \bmat{C^{(n)*}\vec{v}^{\Lambda} \\ (Q(\Lambda) \otimes I_\cH) \vec{v}^\Lambda}\nonumber \\
    &= \bmat{A^{(n)}(C^{(n)*}\vec{v}^\Lambda) + B^{(n)}(Q(\Lambda) \otimes I_\cH) \vec{v}^\Lambda \\ C^{(n)}(C^{(n)*}\vec{v}^\Lambda) + D^{(n)}(Q(\Lambda) \otimes I_\cH) \vec{v}^\Lambda}
\end{align}
As in the commutative case, we claim that $C^{(n)*} \vec{v}^\Lambda \neq 0$. Indeed, if $C^{(n)}\vec{v}^\Lambda = 0$ then comparing the second block in \eqref{eqn:NC.zero.proof.3} gives us
\begin{equation*}
    D^{(n)}(Q(\Lambda)\ \otimes I_\cH)\vec{v}^\Lambda = \vec{v}^\Lambda,
\end{equation*}
which cannot happen since $D^{(n)}$ is a contraction, $\Lambda \in \bD_Q$, and
\begin{equation*}
    \|(Q(\Lambda) \otimes I_\cH)\vec{v}^\Lambda\| \leq \|Q(\Lambda) \otimes I_\cH\| \|\vec{v}^\Lambda\| = \|Q(\Lambda)\| \|\vec{v}^\Lambda\| < \|\vec{v}^\Lambda\|.
\end{equation*}
Thus, $y := C^{(n)*}\vec{v} \neq 0$ and we can define $L_Q(\Lambda)$ as in \eqref{eqn:NC.zero.proof.1}. Comparing the second block in \eqref{eqn:NC.zero.proof.3} and performing a calculation similar to \eqref{eqn:temp.1}, we get
\begin{equation*}
    (Q(\Lambda) \otimes I_\cH) \vec{v}^\Lambda = L_Q(\Lambda)
\end{equation*}
Substituting this to the right side of the first block in \eqref{eqn:NC.zero.proof.3} and then comparing both these sides gives us $f(\Lambda)y = 0$. Thus, $\Lambda \in \cZ_{\bD_Q}(f)$ as required, which completes the proof. \hfill \qedsymbol

\begin{remark}\label{rem:commutative.D_Q.case}
    Note that the above proof works just as fine by taking $n = 1$ for the Schur--Agler class over the scalar matrix unit ball $\bD_Q \subset \bC^d$ using the realization formula given in \eqref{eqn:scalar.D_Q.realization.formula}. In this case, we only need the scalar level of the NC $Q$-eigenvalues which we denote by $\sigma^Q_p(T)$, and the standard zero set in $\bD_Q$ which we also denote by $\cZ_{\bD_Q}(f)$ as in the NC case. We record this below.
\end{remark}

\begin{theorem}\label{thm:zeros.commutative.D_Q.SA.class}
    Let $\bD_Q \subset \bC^d$ be a matrix unit ball, and let $f \in \cS\cA(\bD_Q)$ be a non-constant function that has a unitary realization as in \eqref{eqn:scalar.D_Q.realization.formula} with a colligation $V = \bsmallmat{A & B \\ C & D}$. Then, we have that
    \begin{equation*}
        \cZ_{\bD_Q}(f) = \sigma^Q_p(D^*) \cap \bD_Q.
    \end{equation*}
\end{theorem}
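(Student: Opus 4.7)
The proof is a direct specialization of the NC argument for Theorem \ref{mainthm:zeros.NC.Schur--Agler.class.realization} to the scalar level $n = 1$, as anticipated in Remark \ref{rem:commutative.D_Q.case}. My plan is to run the same two-way argument with the matrix tuple $\Lambda$ replaced throughout by a commuting scalar $\lambda \in \bC^d$, so that $Q(\Lambda) \otimes I_\cH$ collapses to $Q(\lambda) \otimes I_\cH \in \cB(\bC^r \otimes \cH, \bC^s \otimes \cH)$ and the nonzero witness $y \in \bC^n$ for a zero of $f(\Lambda)$ becomes a nonzero scalar that can simply be rescaled away.

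For the forward direction, I would take $\lambda \in \cZ_{\bD_Q}(f)$ and introduce the auxiliary vector
$$L_Q(\lambda) := [I - (Q(\lambda) \otimes I_\cH) D]^{-1} (Q(\lambda) \otimes I_\cH) C \in \bC^s \otimes \cH.$$
The realization formula \eqref{eqn:scalar.D_Q.realization.formula} then yields
$$V \bmat{1 \\ L_Q(\lambda)} = \bmat{A + B L_Q(\lambda) \\ C + D L_Q(\lambda)} = \bmat{f(\lambda) \\ v^\lambda} = \bmat{0 \\ v^\lambda},$$
where $v^\lambda := C + D L_Q(\lambda) \in \bC^r \otimes \cH$. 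Applying $V^* = V^{-1}$ and comparing blocks forces $C^* v^\lambda = 1$ (so in particular $v^\lambda \neq 0$) and $D^* v^\lambda = L_Q(\lambda)$. A short manipulation parallel to \eqref{eqn:temp.1} rewrites $L_Q(\lambda)$ as $(Q(\lambda) \otimes I_\cH) v^\lambda$, giving $\lambda \in \sigma_p^Q(D^*)$.

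For the converse, I would start with $0 \neq v^\lambda \in \bC^r \otimes \cH$ satisfying $D^* v^\lambda = (Q(\lambda) \otimes I_\cH) v^\lambda$ and exploit the identity
$$V^* \bmat{0 \\ v^\lambda} = \bmat{C^* v^\lambda \\ D^* v^\lambda} = \bmat{C^* v^\lambda \\ (Q(\lambda) \otimes I_\cH) v^\lambda}.$$
Applying $V$ on the left, the delicate step is to rule out $C^* v^\lambda = 0$: if it vanished, then the lower block of $\bsmallmat{0 \\ v^\lambda} = V \bsmallmat{0 \\ (Q(\lambda) \otimes I_\cH) v^\lambda}$ would force $D (Q(\lambda) \otimes I_\cH) v^\lambda = v^\lambda$, contradicting the fact that $D$ is a contraction while $\|Q(\lambda)\| < 1$ on $\bD_Q$. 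After rescaling $v^\lambda$ so that $C^* v^\lambda = 1$, the lower block identifies $(Q(\lambda) \otimes I_\cH) v^\lambda$ with $L_Q(\lambda)$, and substituting into the upper block produces $f(\lambda) = A + B L_Q(\lambda) = 0$.

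The only real subtlety — not so much an obstacle — is the tensor-product bookkeeping, since $C$ and $v^\lambda$ live in $\bC^r \otimes \cH$ while $L_Q(\lambda)$ lives in $\bC^s \otimes \cH$, and one must follow carefully how $D$, $D^*$, and $Q(\lambda) \otimes I_\cH$ shuttle between these spaces. Beyond that, no new ideas are required: the scalar setting is in fact cleaner than the genuine NC case, because the rescaling $C^* v^\lambda = 1$ is available without the dimensional constraints imposed by a vector witness $y \in \bC^n$.
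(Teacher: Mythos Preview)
Your proposal is correct and follows exactly the approach indicated in the paper: as Remark \ref{rem:commutative.D_Q.case} states, the proof of Theorem \ref{mainthm:zeros.NC.Schur--Agler.class.realization} specializes verbatim to the scalar level $n = 1$, and your write-up carries out precisely that specialization with the same auxiliary vector $L_Q(\lambda)$, the same use of $V^* V = I$ to extract the two block identities, and the same contraction argument to rule out $C^* v^\lambda = 0$.
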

\section{Boundary Values and the Approximate Point Spectrum}\label{sec:boundary.values.approx.pt.spec}

Boundary values help determine the structure of functions in holomorphic function spaces. For instance, consider $H^\infty(\bD)$ and recall the theorem of Fatou which states that every $f \in H^\infty(\bD)$ has radial limits Lebesgue a.e. on $\bT$, i.e.,
\begin{equation*}
    \lim_{r \to 1} f(r \lambda)
\end{equation*}
exists for Lebesgue a.e. $\lambda \in \bT$. This extends to the entire class $H^p(\bD)$ (for $p \in (0, \infty]$) of Hardy spaces on $\bD$, since every function in $H^p(\bD)$ can be written as a ratio of two $H^\infty(\bD)$ functions. Then, it can be shown that every $f \in H^p(\bD)$ is uniquely determined by its boundary function $f^\dagger$ and that $f^\dagger \in L^p(\bT)$ if $p \geq 1$. This observation is linked to several important results such as Smirnov's factorization theorem, Beurling's theorem, etc. Boundary zeros of holomorphic functions play a pivotal role in problems related to cyclicity (see \cite{Brown-Shields}).

Moving to the multivariate case we branch into two directions. On the one hand, we have the unit ball $\bB_d$ for some $d > 1$. A result of Kor\'anyi \cite{Koranyi-TAMS-1669} generalizes Fatou's theorem and shows that every $f \in H^\infty(\bB_d)$ (and hence, every $f \in \cM(\bB_d)$) has radial limits a.e. on $\bS_d$. On the other hand, we have the unit polydisk $\bD^d$ for some $d > 1$. The situation is notably distinct from the $\bB_d$ case since we get radial limits a.e. along the Shilov boundary $\bT^d$ (also, the distinguished boundary) \cite[Theorem 2.3.1]{Rudin-Book-Polydisk}, which is a rather small piece of the topological boundary $\partial \bD^d$. See \cite[Chapter 5]{Agler-McCarthy-Young-Book} and \cite{Jury-Martin-Fatou} for another flavor and study of problems related to boundary values.

Our goal in this section is twofold. First, we show that the boundary zeros of functions in $\cS\cA(\bD_Q)$ are captured by a version of the approximate point spectrum of its realization operator. Here, we consider zeros along the topological boundary. Lastly, we show that if we consider just the Shilov boundary (to be made precise later) then we can somewhat capture the points along this boundary where the function has a boundary value (not necessarily a zero). We present the main results of this section in the language of NC function theory and mention their commutative counterparts as standalone results at the end.

\subsection{Boundary zeros of NC functions}\label{subsec:boundary.zeros.NC.func}

Let $\bD_Q \subset \bM^d$ be an NC matrix unit ball as in \eqref{eqn:NC.mat.unit.ball}. Recall from Section \ref{subsubsec:NC.mat.unit.ball} that $rX \in \bD_Q$ whenever $0 < r < 1$ and $X \in \bD_Q$, and that the topological boundary of $\bD_Q$ is given by
\begin{equation*}
    \partial \bD_Q := \{ X \in \bM^d : ||Q(X)\| = 1 \}.
\end{equation*}

\begin{definition}\label{def:boundary.zero.NC}
    We say that $\Lambda \in \partial {\bD_Q}_n$ for some $n \in \bN$ is a \emph{boundary zero} of $f$ if there exists $0 \neq y \in \bC^n$ such that
    \begin{equation*}
        \lim_{r \to 1} f(r \Lambda)y = 0.
    \end{equation*}
    The set of all boundary zeros of $f$ (at level $n$) is denoted by $\cZ_{\partial \bD_Q} (f)_n$, and we define the \emph{boundary zero locus} of $f$ as
    \begin{equation*}
        \cZ_{\partial \bD_Q}(f) := \bigsqcup_{n \in \bN} \cZ_{\partial \bD_Q}(f)_n.
    \end{equation*}
\end{definition}

It is straightforward to check that $\cZ_{\partial \bD_Q}(f)$ is also an NC set. We will connect boundary zero loci with the following spectral object.

\begin{definition}\label{def:NC.Q.approx.pt.spec}
    Let $T \in \cB(\cU \otimes \cH, \cV \otimes \cH)$ for some Hilbert spaces $\cH, \cU, \AND \cV$, and let $Q : \bM^d \to \cB(\cU, \cV)_{\nc}$ be an NC map. We say that $\Lambda \in M_{n \times n} \otimes \bC^d$ is an \emph{NC $Q$-approximate eigenvalue} of $T$ (at level $n$) if there exists a sequence of unit vectors $\vec{v}_k \in \cU \otimes \cH \otimes \bC^n$ such that
\begin{equation*}
    \lim_{k \to \infty} \|T^{(n)} \vec{v}_k - (Q(\Lambda) \otimes I_\cH) \vec{v}_k\| = 0.
\end{equation*}

The set of all NC $Q$-approximate eigenvalue at level $n \in \bN$ will be denoted by $\sigma^Q_{ap}(T^{(n)})$, and the \emph{NC $Q$-approximate spectrum} is
\begin{equation*}
    \sigma^Q_{ap}(T) := \bigsqcup_{n \in \bN} \sigma^Q_{ap}(T^{(n)}).
\end{equation*}
\end{definition}

The next lemma is straight-forward to check from the definition.

\begin{lemma}\label{lemma:NC.approx.eigenvals.is.NC.set}
    $\sigma^Q_{ap}(T)$ is an NC set and $\sigma^Q_p(T) \subseteq \sigma^Q_{ap}(T)$.
\end{lemma}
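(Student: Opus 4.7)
The plan is to establish both claims directly from the definitions; the only substantive step is a direct-sum bookkeeping argument, and I would split the proof accordingly.

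First, for the inclusion $\sigma^Q_p(T) \subseteq \sigma^Q_{ap}(T)$: if $\Lambda \in \sigma^Q_p(T^{(n)})$ with non-zero eigenvector $\vec{v} \in \cU \otimes \cH \otimes \bC^n$, then the constant sequence $\vec{v}_k := \vec{v}/\|\vec{v}\|$ consists of unit vectors satisfying $T^{(n)} \vec{v}_k - (Q(\Lambda) \otimes I_\cH) \vec{v}_k = 0$ for every $k$. The defining limit in Definition \ref{def:NC.Q.approx.pt.spec} thus vanishes trivially, and $\Lambda \in \sigma^Q_{ap}(T^{(n)})$.

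Next, to show closure under direct sums, let $\Lambda_1 \in \sigma^Q_{ap}(T^{(n)})$ and $\Lambda_2 \in \sigma^Q_{ap}(T^{(m)})$, with approximating unit sequences $\{\vec{v}^{(1)}_k\}$ and $\{\vec{v}^{(2)}_k\}$ respectively. Under the canonical identification $\cU \otimes \cH \otimes \bC^{n+m} \cong (\cU \otimes \cH \otimes \bC^n) \oplus (\cU \otimes \cH \otimes \bC^m)$, the operator $T^{(n+m)} = T \otimes I_{n+m}$ decomposes as $T^{(n)} \oplus T^{(m)}$; and because $Q$ is a linear NC map while $\Lambda_1 \oplus \Lambda_2$ is the componentwise block-diagonal tuple, one has $Q(\Lambda_1 \oplus \Lambda_2) \otimes I_\cH = (Q(\Lambda_1) \otimes I_\cH) \oplus (Q(\Lambda_2) \otimes I_\cH)$ under the matching identification on $\cV \otimes \cH \otimes \bC^{n+m}$. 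Setting $\vec{w}_k := \vec{v}^{(1)}_k \oplus 0$ produces a unit vector for which
\[
\|T^{(n+m)} \vec{w}_k - (Q(\Lambda_1 \oplus \Lambda_2) \otimes I_\cH) \vec{w}_k\| = \|T^{(n)} \vec{v}^{(1)}_k - (Q(\Lambda_1) \otimes I_\cH) \vec{v}^{(1)}_k\| \longrightarrow 0,
\]
so $\Lambda_1 \oplus \Lambda_2 \in \sigma^Q_{ap}(T^{(n+m)})$, and $\sigma^Q_{ap}(T)$ is an NC set.

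There is no genuine obstacle here. The only thing to be mindful of is the canonical shuffle identifying $\cU \otimes \cH \otimes \bC^{n+m}$ with the orthogonal sum of $\cU \otimes \cH \otimes \bC^n$ and $\cU \otimes \cH \otimes \bC^m$ (and similarly for $\cV$); however, this is precisely the bookkeeping already implicit in Lemma \ref{lemma:nceigenvalue} for $\sigma^Q_p$, so no new machinery is required.
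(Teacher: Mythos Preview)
Your proof is correct and follows exactly the approach the paper intends: the paper does not spell out an argument but simply states that the lemma ``is straight-forward to check from the definition,'' and your two verifications (constant normalized eigenvector for the inclusion, and the block-diagonal bookkeeping with $\vec{w}_k = \vec{v}^{(1)}_k \oplus 0$ for closure under direct sums) are precisely the routine checks that statement invites. Nothing further is needed.
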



\subsubsection*{\texorpdfstring{\textbf{Proof of Theorem \ref{mainthm:boundary.zeros.NC}}}{Proof of Theorem D}}

Suppose $\Lambda \in \cZ_{\partial \bD_Q}(f)_n$ for some $n \in \bN$ with $0 \neq y \in \bC^n$ as in Definition \ref{def:boundary.zero.NC}, and let $\{r_k\}_{k \in \bN} \subset (0,1)$ be such that $\lim_{k \to \infty} r_k = 1$. We introduce the sequence of vectors
\begin{equation*}
   L_k(\Lambda) := [I - (Q(r_k \Lambda) \otimes I_\cH) D^{(n)}]^{-1} (Q(r_k \Lambda) \otimes I_\cH) C^{(n)}y \in \cV \otimes \cH \otimes \bC^n
\end{equation*}
and note for each $k \in \bN$ that
\begin{equation}\label{eqn:NC.boundary.zero.approx.pt.spec.proof.1}
   \bmat{A^{(n)} & B^{(n)} \\ C^{(n)} & D^{(n)}} \bmat{y \\ L_k(\Lambda)} = \bmat{f(r_k \Lambda) y \\ C^{(n)}y + D^{(n)}L_k(\Lambda)}.
\end{equation}
Define $\vec{v}_k := C^{(n)}y + D^{(n)} L_k(\Lambda)$ and note using a calculation similar to \eqref{eqn:temp.1} that
\begin{equation}\label{eqn:NC.boundary.zero.approx.pt.spec.proof.1.5}
    L_k(\Lambda) = (Q(r_k \Lambda) \otimes I_\cH)\vec{v}_k \foral k \in \bN.
\end{equation}
Applying $V^*$ to both sides of \eqref{eqn:NC.boundary.zero.approx.pt.spec.proof.1} and using \eqref{eqn:NC.boundary.zero.approx.pt.spec.proof.1.5} gives us
\begin{equation}\label{eqn:NC.boundary.zero.approx.pt.spec.proof.3}
    \bmat{y \\ (Q(r_k \Lambda) \otimes I_\cH) \vec{v}_k} = \bmat{A^{(n)*}f(r_k \Lambda)y + C^{(n)*}\vec{v}_k \\ B^{(n)*}f(r_k \Lambda)y + D^{(n)*}\vec{v}_k}.
\end{equation}
Comparing the first block in \eqref{eqn:NC.boundary.zero.approx.pt.spec.proof.3} and taking the limit as $k \to \infty$ we get
\begin{equation*}
    \lim_{k \to \infty} C^{(n)*}\vec{v}_k = y - \lim_{k \to \infty} A^{(n)*} f(r_k \Lambda)y = y \neq 0.
\end{equation*}

This means that $\{\|\vec{v}_k\|\}_{k \in \bN}$ is bounded below and $\big\{\vec{v}_k/\|\vec{v}_k\|\big\}_{k \in \bN}$ is a sequence of unit vectors. Comparing the second block in \eqref{eqn:NC.boundary.zero.approx.pt.spec.proof.3} we get
\begin{align}\label{eqn:temp.2}
    \begin{split}
        \|D^{(n)*}\vec{v}_k - (Q(\Lambda) \otimes I_\cH) \vec{v}_k\| &\leq \|D^{(n)*}\vec{v}_k - (Q(r_k \Lambda) \otimes I_\cH) \vec{v}_k\| \\ & \qquad + \|(Q(r_k \Lambda) \otimes I_\cH) \vec{v}_k - (Q(\Lambda) \otimes I_\cH) \vec{v}_k\|
        \end{split} \nonumber\\
        &= \|B^{(n)*} f(r_k \Lambda)y\| + (1 - r_k)\|(Q(\Lambda) \otimes I_\cH) \vec{v}_k\| \nonumber\\
        &\leq \|f(r_k \Lambda) y\| + (1 - r_k) \|\vec{v}_k\|.
\end{align}
Since $\|\vec{v}_k\|$ is bounded below, there exists $\delta > 0$ such that $ \inf_k \|\vec{v}_k\| \geq \delta$ and thus
\begin{equation*}
    \limsup_{k \to \infty} \frac{\|f(r_k \Lambda) y\|}{\|\vec{v}_k\|} \leq \lim_{k \to \infty}\|f(r_k \Lambda)y\| \limsup_{k \to \infty} \frac{1}{\|\vec{v}_k\|} \leq \delta \lim_{k \to \infty} \|f(r_k \Lambda) y\| = 0.
    \end{equation*}
Dividing both sides in \eqref{eqn:temp.2} by $\|\vec{v}_k\|$ and letting $k \to \infty$ shows $\Lambda \in \sigma^Q_{ap}(D^*)$. \hfill \qedsymbol

\subsection{Boundary values of NC functions}\label{subsec:boundary.val.NC.func}

\begin{definition} \label{def:boundaryvalue;nc}
    An NC function $f \in \cS\cA(\bD_Q)$ is said to have a \emph{boundary value} at $\Lambda \in \partial {\bD_Q}_n$ for some $n \in \bN$ if $\lim_{r \to 1} f(r \Lambda)$ exists. In this case, we denote the boundary value of $f$ at $\Lambda$ by
    \begin{equation*}
        f^\dagger(\Lambda) := \lim_{r \to 1} f(r \Lambda)
    \end{equation*}
    and say that $\Lambda$ is a \emph{point of boundary value} for $f$. The map $f^\dagger : BP(f) \to \bM^1$ is called the \emph{boundary function} of $f$. The set of all points of boundary values for $f$ will be denoted by $BP(f)$. Lastly, for each $t \in (0,1]$ we define
    \begin{align}
        BP(f,t) &:= \bigsqcup_{n \in \bN} \{ \Lambda \in BP(f)_n : \|f^\dagger(\Lambda)y\| < t \|y\| \text{ for some } 0 \neq y \in \bC^n \},\label{eqn:def.BP(f,1)}\\
        BP^*(f,t) &:= \bigsqcup_{n \in \bN} \{ \Lambda \in BP(f)_n : \|y^* f^\dagger(\Lambda)\| < t \|y^*\| \text{ for some } 0 \neq y \in \bC^n \}\label{eqn:def.BP*(f,1)}
    \end{align}
    where, for a vector $y \in \bC^n$, $y^*$ is the mapping $v \mapsto \ip{v,y}$ which can be identified with $y$ via the Riesz representation theorem.
\end{definition}

We will only focus on $BP(f,t)$ and $BP^*(f,t)$ for the case $t = 1$ in this paper. Note that $BP(f,1)$ consists of all $\Lambda \in BP(f)$ such that $f^\dagger(\Lambda)$ is not an isometry, and $BP^*(f,1)$ consists of those points for which $f^\dagger(\Lambda)$ is not a coisometry.

\begin{proposition}\label{prop:boundary.func.and.value.property}
    Let $f \in \cS\cA(\bD_Q)$ be given. Then, the following must hold:
    \begin{enumerate}
        \item $BP(f)$, $BP(f,t)$ and $BP^*(f,t)$ are NC sets for all $t \in (0,1]$.
        \item If $\Lambda \in {BP(f)}_n$ and $S \in GL_n$ for some $n \in \bN$ are such that $S^{-1} \Lambda S \in \partial \bD_Q$, then $S^{-1} \Lambda S \in BP(f)$ and $f^\dagger(S^{-1}\Lambda S) = S^{-1} f^\dagger(\Lambda) S$.
        \item $f^\dagger : BP(f) \to \bM^1$ is an NC function.
    \end{enumerate}
\end{proposition}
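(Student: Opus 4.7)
The plan is to verify the three items in order, with (1) and (2) being the direct content and (3) simply assembling them. The main tool is that $f$ is an NC function, so it respects direct sums and similarities on its domain $\bD_Q$, together with the observation that the topological boundary $\partial \bD_Q$ and the open set $\bD_Q$ are themselves closed under the relevant operations (direct sums preserve $\|Q(\cdot)\|$ as a max, and similarities are assumed to preserve $\partial \bD_Q$ in the statement).

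For (1), I would take $\Lambda_j \in BP(f)_{n_j}$ for $j = 1, 2$ and first note that $Q(\Lambda_1 \oplus \Lambda_2)$ is unitarily equivalent to $Q(\Lambda_1) \oplus Q(\Lambda_2)$ (since $Q$ is linear and a canonical shuffle intertwines the block structures), giving $\|Q(\Lambda_1 \oplus \Lambda_2)\| = \max(\|Q(\Lambda_1)\|, \|Q(\Lambda_2)\|) = 1$; in particular $\Lambda_1 \oplus \Lambda_2 \in \partial \bD_Q$, and $r(\Lambda_1 \oplus \Lambda_2) \in \bD_Q$ for $r \in (0,1)$. Since $f$ respects direct sums, $f(r(\Lambda_1 \oplus \Lambda_2)) = f(r\Lambda_1) \oplus f(r\Lambda_2)$, so the limit as $r \to 1$ exists and equals $f^\dagger(\Lambda_1) \oplus f^\dagger(\Lambda_2)$. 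This proves $BP(f)$ is an NC set and, as a byproduct, that $f^\dagger$ respects direct sums. For $BP(f,t)$, if $\Lambda_1$ has witness $0 \neq y_1 \in \bC^{n_1}$, the vector $y := \bsmallmat{y_1 \\ 0} \in \bC^{n_1 + n_2}$ is a witness for $\Lambda_1 \oplus \Lambda_2$ since
\begin{equation*}
    \|f^\dagger(\Lambda_1 \oplus \Lambda_2) y\| = \|f^\dagger(\Lambda_1) y_1\| < t \|y_1\| = t \|y\|.
\end{equation*}
The same argument with $y^*$ handles $BP^*(f,t)$.

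For (2), the hypothesis $S^{-1} \Lambda S \in \partial \bD_Q$ gives $\|Q(r S^{-1} \Lambda S)\| = r < 1$ for $r \in (0,1)$, so $r S^{-1} \Lambda S = S^{-1} (r\Lambda) S \in \bD_Q$ and likewise $r \Lambda \in \bD_Q$. The similarity axiom for the NC function $f$ then gives $f(r S^{-1} \Lambda S) = S^{-1} f(r \Lambda) S$ for every $r \in (0,1)$. Since $A \mapsto S^{-1} A S$ is continuous on $M_{n \times n}$, taking $r \to 1$ shows the limit on the left exists and equals $S^{-1} f^\dagger(\Lambda) S$; hence $S^{-1}\Lambda S \in BP(f)$ with the claimed formula. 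Finally, (3) follows by unpacking the definition of an NC function: gradedness is automatic because $f^\dagger(\Lambda)$ is defined as a limit of matrices of the same size as $\Lambda$, respecting direct sums was established in (1), and respecting similarities was established in (2). There is no real obstacle here beyond bookkeeping; the only point requiring a hint of care is verifying that the relevant direct sums and conjugates actually lie in $\partial \bD_Q$ (and hence in the domain of $f^\dagger$), which in each case boils down to the linearity of $Q$ combined with the explicit hypotheses.
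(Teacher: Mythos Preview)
Your proof is correct and follows essentially the same route as the paper's: both verify closure under direct sums via $f(r(\Lambda_1\oplus\Lambda_2))=f(r\Lambda_1)\oplus f(r\Lambda_2)$, both obtain (2) from the similarity axiom applied at each $r<1$ followed by passage to the limit, and both assemble (3) from (1) and (2). The only cosmetic difference is that for $BP(f,t)$ you use the simpler witness $y=\bsmallmat{y_1\\0}$, whereas the paper uses $y=\bsmallmat{y_1\\y_2}$ with both witnesses; your choice is slightly cleaner and equally valid since only one inequality is required.
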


\begin{proof}
    Let $f$ be as in the hypothesis.
    \begin{enumerate}
        \item $BP(f)$ is an NC set, since $\Lambda_1, \Lambda_2 \in BP(f)$ implies
        \begin{equation}\label{eqn:boundary.val.prop.proof.1}
            f^\dagger\Big( \bmat{\Lambda_1 & 0 \\ 0 & \Lambda_2} \Big) = \lim_{r \to 1} \bmat{f(r\Lambda_1) & 0 \\ 0 & f(r \Lambda_2)} = \bmat{f^\dagger(\Lambda_1) & 0 \\ 0 & f^\dagger(\Lambda_2)}.
        \end{equation}
        Now, fix $t \in (0,1]$ and let $\Lambda_1 \in BP(f,t)_{n_1}, \, \Lambda_2 \in BP(f,t)_{n_2}$. If $0 \neq y_j \in \bC^{n_j}$ ($j = 1, 2$) are such that $\|f^\dagger(\Lambda_j) y_j \| < t \|y_j\|$, then note that, for $0 \neq y = \bmat{y_1 & y_2}^t \in \bC^{n_1 + n_2}$, we have
        \begin{align*}
            \Big\|f^\dagger\Big(\bmat{\Lambda_1 & 0 \\ 0 & \Lambda_2}\Big) \bmat{y_1 \\ y_2}\Big\|^2 &= \lim_{r \to 1} \Big\|\bmat{f(r \Lambda_1) & 0 \\ 0 & f(r \Lambda_2)} \bmat{y_1 \\ y_2}\Big\|^2 \\
            &= \|f^\dagger(\Lambda)y_1\|^2 + \|f^\dagger(\Gamma)y_2\|^2 \\
            &< t^2 \|y\|^2.
        \end{align*}
        Thus, $BP(f,t)$ is an NC set, and the proof for $BP^*(f,t)$ is similar.
        \item If $\Lambda \in BP(f)_n$ and $S \in GL_n$ are as in the hypothesis, then it is straightforward to check that
        \begin{equation}\label{eqn:boundary.val.prop.proof.2}
            \lim_{r \to 1} f(r S^{-1}\Lambda S) = S^{-1}\lim_{r \to 1} f(r \Lambda) S = S^{-1} f^\dagger(\Lambda)S.
        \end{equation}
        \item This follows immediately from \eqref{eqn:boundary.val.prop.proof.1} and \eqref{eqn:boundary.val.prop.proof.2}. \qedhere
    \end{enumerate}
\end{proof}


\begin{definition}\label{def:isometric.boundary.pt.NC}
    Let $\bD_Q \subset \bM^d$ be an NC matrix unit ball for some $d \in \bN$. We define the \emph{isometric} and the \emph{coisometric portions} of the boundary of $\bD_Q$ as
    \begin{align*}
        \partial_{\iso}\bD_Q &:= \{ \Lambda \in \bM^d : Q(\Lambda)^* Q(\Lambda) = I_\cU \}, \AND\\
        \partial_{\coiso}\bD_Q &:= \{ \Lambda \in \bM^d : Q(\Lambda) Q(\Lambda)^* = I_\cV \}
    \end{align*}
    respectively. If $\cU \cong \cV$, we also define the \emph{unitary portion} of the boundary as
    \begin{equation*}
        \partial_{\uni}\bD_Q = \partial_{\iso} \bD_Q \cap \partial_{\coiso} \bD_Q.
    \end{equation*}
    Lastly, the \emph{essential boundary} of $\bD_Q$ is defined as $\partial_e \bD_Q := \partial_{\iso} \bD_Q \cup \partial_{\coiso} \bD_Q$.
\end{definition}

\begin{remark}\label{rem:Shilov.vs.essential.boundary}
    It is shown in \cite[Example 1.5.51]{Upmeier-Book-Shilov} (see also \cite[Section 2.3]{Popa-Vinnikov}) that the Shilov boundary of
\begin{enumerate}
    \item the NC unit polydisk $\fD^d$ is $\partial_{\uni}(\fD^d)$,
    \item the NC unit row ball $\fB_d$ is $\partial_{\coiso}(\fB_d)$, and
    \item the NC unit column ball $\fC_d$ is $\partial_{\iso}(\fC_d)$.
\end{enumerate}
Moreover, a quick dimensional analysis shows that if $\dim \cU \neq \dim \cV$ then one of $\partial_{\iso}(\bD_Q)$ or $\partial_{\coiso}(\bD_Q)$ must always be empty. It follows that the Shilov boundaries of $\fB_d$ and $\fC_d$ coincide with their essential boundaries, and the Shilov boundary of $\fD^d$ is strictly contained in its essential boundary. At the scalar level (i.e., at level $n = 1$), however, we note that $\partial_e(\fD^d)_1 = \bT^d$ -- the Shilov boundary of $\bD^d$.
\end{remark}


\subsubsection*{\texorpdfstring{\textbf{Proof of Theorem \ref{mainthm:NC.boundary.val.approx.pt.spec}}}{Proof of Theorem E}}

Suppose $\Lambda \in BP(f,1)_n \cap \partial_{\iso}(\bD_Q)$ for some $n \in \bN$ with $0 \neq y \in \bC^n$ as in \eqref{eqn:def.BP(f,1)}, and let $\{r_k\}_{k \in \bN} \subset (0,1)$ be such that $\lim_{k \to \infty} r_k = 1$. As in the proof of Theorem \ref{mainthm:boundary.zeros.NC}, we introduce the sequence of vectors
\begin{equation*}
    L_k(\Lambda) := [I - (Q(r_k \Lambda) \otimes I_\cH) D^{(n)}]^{-1} (Q(r_k \Lambda) \otimes I_\cH) C^{(n)}y \in \cV \otimes \cH \otimes \bC^n
\end{equation*}
and note for each $k \in \bN$ that
\begin{equation}\label{eqn:NC.boundary.val.approx.pt.spec.proof.1}
    \bmat{A^{(n)} & B^{(n)} \\ C^{(n)} & D^{(n)}} \bmat{y \\ L_k(\Lambda)} = \bmat{f(r_k \Lambda) y \\ C^{(n)}y + D^{(n)}L_k(\Lambda)}.
\end{equation}
Define $\vec{v}_k := C^{(n)}y + D^{(n)} L_k(\Lambda)$ and note using a calculation similar to \eqref{eqn:temp.1} that
\begin{equation}\label{eqn:NC.boundary.val.approx.pt.spec.proof.1.5}
    L_k(\Lambda) = (Q(r_k \Lambda) \otimes I_\cH)\vec{v}_k.
\end{equation}
Since $V = \bsmallmat{A & B \\ C & D}$ is a unitary, we can compare the norms of the two vectors in \eqref{eqn:NC.boundary.val.approx.pt.spec.proof.1} and use the above equality to record the following relationship for later use:
\begin{equation}\label{eqn:NC.boundary.val.approx.pt.spec.proof.2}
    \|f(r_k \Lambda)y\|^2 + \|\vec{v}_k\|^2 = \|y\|^2 + \|(Q(r_k \Lambda) \otimes I_\cH) \vec{v}_k\|^2
\end{equation}
Applying $V^*$ to both sides of \eqref{eqn:NC.boundary.val.approx.pt.spec.proof.1} and using \eqref{eqn:NC.boundary.val.approx.pt.spec.proof.1.5} gives us
\begin{equation}\label{eqn:NC.boundary.val.approx.pt.spec.proof.3}
    \bmat{y \\ (Q(r_k \Lambda) \otimes I_\cH) \vec{v}_k} = \bmat{A^{(n)*}f(r_k \Lambda)y + C^{(n)*}\vec{v}_k \\ B^{(n)*}f(r_k \Lambda)y + D^{(n)*}\vec{v}_k}.
\end{equation}
Comparing the first block in \eqref{eqn:NC.boundary.val.approx.pt.spec.proof.3} and taking the limit as $k \to \infty$ we get
\begin{equation*}
    \lim_{k \to \infty} C^{(n)*}\vec{v}_k = y - A^{(n)*} f^\dagger(\Lambda)y = [I - A^{(n)*}f^\dagger(\Lambda)] y \neq 0
\end{equation*}
(since $\|A\| = \|f(0)\| < 1$ and $\|f^\dagger(\Lambda)\| \leq 1$ imply that $I - A^{(n)*}f^\dagger(\Lambda)$ is invertible).

This means that $\{\|\vec{v}_k\|\}_{k \in \bN}$ is bounded below and $\big\{\vec{v}_k/\|\vec{v}_k\|\big\}_{k \in \bN}$ is a sequence of unit vectors. Comparing the second block in \eqref{eqn:NC.boundary.val.approx.pt.spec.proof.3} we get
\begin{align*}
    \begin{split}
        \|D^{(n)*}\vec{v}_k - (Q(\Lambda) \otimes I_\cH) \vec{v}_k\| &\leq \|D^{(n)*}\vec{v}_k - (Q(r_k \Lambda) \otimes I_\cH) \vec{v}_k\| \\ & \qquad + \|(Q(r_k \Lambda) \otimes I_\cH) \vec{v}_k - (Q(\Lambda) \otimes I_\cH) \vec{v}_k\|
    \end{split} \\
    &= \|B^{(n)*} f(r_k \Lambda)y\| + (1 - r_k)\|(Q(\Lambda) \otimes I_\cH) \vec{v}_k\| \\
    &\leq \|f(r_k \Lambda) y\| + (1 - r_k) \|\vec{v}_k\|,
\end{align*}
for all $k \in \bN$. To complete the proof, it then suffices to show that
\begin{equation*}
    \lim_{k \to \infty} \frac{\|f(r_k \Lambda) y\|}{\| \vec{v}_k \|} = 0.
\end{equation*}
Consequently, it suffices to show that
\begin{equation*}
    \limsup_{k \to \infty} \frac{1}{\|\vec{v}_k\|} = 0,
\end{equation*}
since the boundary value of $f$ at $\Lambda$ exists. To this end, we finally use the fact that $\Lambda \in \partial_{\iso}(\bD_Q)$ and that $Q$ is a matrix of linear NC polynomials so that
\begin{align*}
    \|(Q(r_k \Lambda)\otimes I_\cH)\vec{v}_k\| &= r_k \|(Q(\Lambda) \otimes I_\cH) \vec{v}_k\| \\
    &= r_k \ip{(Q(\Lambda)^*Q(\Lambda) \otimes I_\cH)\vec{v}_k, \vec{v}_k} \\
    &= r_k \|\vec{v}_k\|
\end{align*}
for all $k \in \bN$. Substituting this back into \eqref{eqn:NC.boundary.val.approx.pt.spec.proof.2}, we get
\begin{alignat*}{2}
    &&\|f(r_k \Lambda) y\|^2 + \|\vec{v}_k\|^2 &= \|y\|^2 + r_k^2 \|\vec{v}_k\|^2 \\
    \implies && (1 - r_k^2)\|\vec{v}_k\|^2 &= \|y\|^2 - \|f(r_k \Lambda)y\|^2 \\
    \implies && \frac{1}{\|\vec{v}_k\|^2} &= \frac{1 - r_k^2}{\|y\|^2 - \|f(r_k \Lambda)y\|^2}.
\end{alignat*}
Since the denominator in the final inequality above is bounded below (using the fact that $\Lambda \in BP(f,1)$), it follows that
\begin{equation*}
    \limsup_{k \to \infty} \frac{1}{\|\vec{v}_k\|} = 0.
\end{equation*}

Now, suppose $\Lambda \in BP^*(f,1) \cap \partial_{\coiso}(\bD_Q)$ with $0 \neq y \in \bC^n$ as in \eqref{eqn:def.BP*(f,1)}, and let $\{r_k\}_{k \in \bN } \subset (0,1)$ be a sequence such that $\lim_{k \to \infty} r_k = 1$ as before. This time, we introduce the sequence of maps
\begin{equation*}
    R_k(\Lambda) := B^{(n)}[I - (Q(r_k \Lambda) \otimes I_\cH)D^{(n)}]^{-1}(Q(r_k \Lambda) \otimes I_\cH)
\end{equation*}
for each $k \in \bN$, and note that
\begin{equation*}
    \bmat{y^* & R_k(\Lambda)}\bmat{A^{(n)} & B^{(n)} \\ C^{(n)} & D^{(n)}} = \bmat{y^* f(r_k \Lambda) & y^*B^{(n)} + R_k(\Lambda)D^{(n)}}.
\end{equation*}
Then, similar to \eqref{eqn:NC.boundary.val.approx.pt.spec.proof.1.5}, we get the relationship
\begin{equation*}
    R_k(\Lambda) = (Q(r_k \Lambda) \otimes I_\cH) \vec{v}_k^*
\end{equation*}
for vectors $\vec{v}_k$ identified with $\vec{v}_k^* := y^* B^{(n)} + R_k(\Lambda)D^{(n)}$ for all $k \in \bN$. The rest of the argument is identical to the case above. This completes the proof. \hfill \qedsymbol

\subsection{Boundary values of commutative functions}\label{subsec:boundary.val.comm.func}

Note that a matrix unit ball $\bD_Q \subset \bC^d$ is simply the first level of an NC matrix unit ball. Therefore, the definitions of boundary zeros and the approximate point spectrum can be defined analogously as in the NC setup by taking $n = 1$ in Definitions \ref{def:boundary.zero.NC} and \ref{def:NC.Q.approx.pt.spec}. The following result then follows from the proof of Theorem \ref{mainthm:boundary.zeros.NC} by taking $n = 1$.

\begin{theorem}\label{thm:boundary.zero.commutative}
    Let $\bD_Q \subset \bC^d$ be a matrix unit ball, and let $f \in \cS\cA(\bD_Q)$ be a non-constant function admitting a unitary realization as in \eqref{eqn:scalar.D_Q.realization.formula} with a colligation $V = \bsmallmat{A & B \\ C & D}$. Then,
   $\cZ_{\partial \bD_Q(f)} \subseteq \sigma^Q_{ap}(D^*).$
   
\end{theorem}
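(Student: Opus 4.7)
The plan is to specialize the proof of Theorem~\ref{mainthm:boundary.zeros.NC} to level $n = 1$. In the scalar case, Definition~\ref{def:boundary.zero.NC} collapses: a point $\lambda \in \cZ_{\partial \bD_Q}(f)$ is simply a $\lambda \in \partial \bD_Q$ with $\lim_{r \to 1} f(r\lambda) = 0$, since the auxiliary nonzero vector $y \in \bC^1$ may be normalized to $y = 1$. Given such a $\lambda$, the goal is to produce unit vectors $\vec{u}_k \in \bC^r \otimes \cH$ with
\begin{equation*}
    \| D^* \vec{u}_k - (Q(\lambda) \otimes I_\cH) \vec{u}_k \| \longrightarrow 0,
\end{equation*}
which certifies $\lambda \in \sigma^Q_{ap}(D^*)$.

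I would follow the NC template essentially verbatim. Fix a sequence $r_k \nearrow 1$ and, using the realization formula \eqref{eqn:scalar.D_Q.realization.formula} at the interior point $r_k \lambda \in \bD_Q$, define
\begin{equation*}
    L_k := [I - (Q(r_k \lambda) \otimes I_\cH) D]^{-1} (Q(r_k \lambda) \otimes I_\cH) C \qand \vec{v}_k := C + D L_k,
\end{equation*}
so that the colligation equation $V \bsmallmat{1 \\ L_k} = \bsmallmat{f(r_k \lambda) \\ \vec{v}_k}$ holds and a short rearrangement yields the key identity $L_k = (Q(r_k \lambda) \otimes I_\cH) \vec{v}_k$.

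Next I invert this relation by applying $V^*$ and compare the two blocks. The first block gives $C^* \vec{v}_k = 1 - A^* f(r_k \lambda) \to 1$, so $\|\vec{v}_k\|$ stays bounded below by some $\delta > 0$. The second block gives $D^* \vec{v}_k - (Q(r_k \lambda) \otimes I_\cH) \vec{v}_k = -B^* f(r_k \lambda)$. Setting $\vec{u}_k := \vec{v}_k / \|\vec{v}_k\|$ and estimating as in the triangle inequality leading to \eqref{eqn:temp.2},
\begin{equation*}
    \| D^* \vec{u}_k - (Q(\lambda) \otimes I_\cH) \vec{u}_k \| \leq \frac{\|f(r_k \lambda)\|}{\|\vec{v}_k\|} + (1 - r_k),
\end{equation*}
and both summands on the right tend to zero.

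Since the NC proof already handled arbitrary $n$, no genuine obstacle arises in the $n = 1$ reduction: the tensor factor $I_{\bC^n}$ trivializes, the scalar $y = 1$ replaces the vector $y \in \bC^n$, and every identity in the NC argument specializes directly. The only bookkeeping point worth noting is that the lower bound $\|\vec{v}_k\| \geq \delta$ here requires only $A^* f^\dagger(\lambda) \neq 1$, which is automatic because $f^\dagger(\lambda) = 0$; this is in fact simpler than the corresponding step in the companion Theorem~\ref{mainthm:NC.boundary.val.approx.pt.spec}, where the invertibility of $I - A^* f^\dagger(\Lambda)$ required the strict contractivity $\|A\| < 1$.
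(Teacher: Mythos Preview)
Your proposal is correct and follows exactly the paper's approach: the paper's proof of this theorem is simply the observation that one takes $n = 1$ in the proof of Theorem~\ref{mainthm:boundary.zeros.NC}, and you have carried out precisely that specialization, including the definitions of $L_k$ and $\vec{v}_k$, the application of $V^*$, the lower bound on $\|\vec{v}_k\|$ from the first block, and the triangle-inequality estimate \eqref{eqn:temp.2} from the second. Your closing remark comparing the lower-bound step here with the one in Theorem~\ref{mainthm:NC.boundary.val.approx.pt.spec} is a nice extra observation but not needed for the argument.
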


In terms of boundary values, note that for any $f \in \cS\cA(\bD_Q)$ in the commutative Schur--Agler class we have
\begin{equation*}
    BP(f,1) = \{\lambda \in BP(f) : |f^\dagger(\lambda)| < 1 \} = BP^*(f,1).
\end{equation*}
Thus, if we define $\partial_{\iso} \bD_Q, \partial_{\coiso} \bD_Q, \partial_e \bD_Q,$ and $\sigma_{ap}^Q(T)$ in the appropriate fashion, we arrive at the following version of Theorem \ref{mainthm:NC.boundary.val.approx.pt.spec} for the commutative case.

\begin{theorem}\label{thm:boundary.val.commutative}
    If $f \in \cS\cA(\bD_Q)$ is as in the hypothesis of Theorem \ref{thm:boundary.zero.commutative}, then
    \begin{equation*}
        BP(f,1) \cap \partial_e \bD_Q \subseteq \sigma^Q_{ap}(D^*).
    \end{equation*}
\end{theorem}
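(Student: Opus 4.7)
The plan is to recognize Theorem \ref{thm:boundary.val.commutative} as the $n=1$ specialization of Theorem \ref{mainthm:NC.boundary.val.approx.pt.spec}, combined with the fact that isometry and coisometry coincide for scalars. Indeed, in the commutative setting $f^\dagger(\lambda) \in \bC$ is an isometry if and only if $|f^\dagger(\lambda)| = 1$ if and only if it is a coisometry, so
\begin{equation*}
    BP(f,1) = BP^*(f,1) = \{\lambda \in BP(f) : |f^\dagger(\lambda)| < 1\}.
\end{equation*}
Consequently, the NC inclusion
\begin{equation*}
    \bigl(BP(f,1) \cap \partial_{\iso}\bD_Q\bigr) \cup \bigl(BP^*(f,1) \cap \partial_{\coiso}\bD_Q\bigr) \subseteq \sigma^Q_{ap}(D^*)
\end{equation*}
becomes at $n = 1$ precisely $BP(f,1) \cap \partial_e \bD_Q \subseteq \sigma^Q_{ap}(D^*)$, which is the claim.

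For a self-contained treatment, I would fix $\lambda \in BP(f,1) \cap \partial_e \bD_Q$ and a sequence $r_k \nearrow 1$, and then run the NC argument verbatim with $y = 1$. Setting $L_k(\lambda) := [I - Q(r_k\lambda) D]^{-1} Q(r_k\lambda) C$ and $v_k := C + DL_k(\lambda)$, the realization formula \eqref{eqn:scalar.D_Q.realization.formula} gives $V\bmat{1 \\ L_k(\lambda)} = \bmat{f(r_k\lambda) \\ v_k}$, and a routine manipulation identical to \eqref{eqn:temp.1} produces $L_k(\lambda) = Q(r_k\lambda) v_k$. Applying $V^*$ and comparing first coordinates yields $C^* v_k \to 1 - \overline{A}\, f^\dagger(\lambda)$, which is non-zero since $|A| = |f(0)| < 1$ (maximum modulus, as $f$ is non-constant) and $|f^\dagger(\lambda)| \leq 1$. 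Hence $\|v_k\|$ is bounded below.

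In the isometric case $\lambda \in \partial_{\iso}\bD_Q$, the identity $\|Q(r_k\lambda) v_k\|^2 = r_k^2\|v_k\|^2$ combined with the Pythagorean identity $1 + \|L_k(\lambda)\|^2 = |f(r_k\lambda)|^2 + \|v_k\|^2$ across the unitary $V$ yields
\begin{equation*}
    \|v_k\|^2 = \frac{1 - |f(r_k\lambda)|^2}{1 - r_k^2} \to +\infty,
\end{equation*}
since the numerator converges to $1 - |f^\dagger(\lambda)|^2 > 0$ by the $BP(f,1)$ hypothesis. The estimate $\|D^* v_k - Q(\lambda) v_k\| \leq |f(r_k\lambda)| + (1 - r_k)\|v_k\|$ from the second block of the $V^*$-identity, divided by $\|v_k\|$, then sends both terms to zero: the first because $\|v_k\| \to \infty$, and the second because $(1-r_k)\|v_k\|$ is the square root of $(1-r_k)(1 - |f(r_k\lambda)|^2)/(1+r_k) \to 0$. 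Thus the unit vectors $v_k/\|v_k\|$ witness $\lambda \in \sigma^Q_{ap}(D^*)$.

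For the coisometric case $\lambda \in \partial_{\coiso}\bD_Q$, I would mirror the dual argument from the end of the proof of Theorem \ref{mainthm:NC.boundary.val.approx.pt.spec}, introducing $R_k(\lambda) := B[I - Q(r_k\lambda)D]^{-1}Q(r_k\lambda)$ and the row vector $w_k^* := B + R_k(\lambda) D$, and using $Q(\lambda) Q(\lambda)^* = I_\cV$ in place of $Q(\lambda)^* Q(\lambda) = I_\cU$. The main obstacle is purely organizational: one must track which side of the colligation one is on so that the crucial Pythagorean identity produces the blowup of $\|v_k\|$ (resp.\ $\|w_k\|$), but no ideas beyond those of the NC proof are required.
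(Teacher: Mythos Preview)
Your proposal is correct and follows exactly the paper's approach: the paper deduces Theorem \ref{thm:boundary.val.commutative} from Theorem \ref{mainthm:NC.boundary.val.approx.pt.spec} by observing that at level $n=1$ one has $BP(f,1) = BP^*(f,1)$, so that the NC inclusion collapses to $BP(f,1)\cap\partial_e\bD_Q \subseteq \sigma^Q_{ap}(D^*)$. Your self-contained treatment simply unpacks the proof of Theorem \ref{mainthm:NC.boundary.val.approx.pt.spec} at $n=1$, which the paper does not bother to spell out; the computations are all correct (though note that after dividing by $\|v_k\|$ the second term is just $(1-r_k)\to 0$, so your square-root computation, while correct, is more than you need).
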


Recall from Remark \ref{rem:Shilov.vs.essential.boundary} that $\partial_e \bD^d = \bT^d$ and $\partial_e \bB_d = \bS_d$ for each $d$. Also observe that for any $f \in \cS(\bD)$, $\sigma^Q_{ap}(D^*)$ is simply the approximate point spectrum of $D^*$.



\begin{example}\label{example:general.boundary.val}
    Let $f \in \cS\cA(\bD^d)$ be a rational inner function. We saw in Example
    \ref{example:general.zeros} that it has a finite dimensional unitary realization given by $V = \bsmallmat{A & B \\ C & D} \in \cB(\bC \oplus \bC^N)$ for some $N \in \bN$. It was also noted that
    \begin{equation*}
        \sigma_p^{\diag}(D^*) = \{ \lambda \in \bC^d : \det (D^* - \Delta(\lambda)) = 0 \} =: V(\det (D^* - \Delta(z))).
    \end{equation*}
    Now, if $\lambda \not \in V(\det(D^* - \Delta(\lambda)))$ then $D^* - \Delta(\lambda)$ is invertible, and hence there is no sequence of unit vectors $v_k \in \bC^d$ with $\|(D^* - \Delta(\lambda))v_k\| \to 0$. Consequently,
    \begin{equation}\label{eqn:temp}
        \sigma^{\diag}_{ap}(D^*) = \sigma^{\diag}_p(D^*) = \{\lambda \in \bC^d : \det(D^* - \Delta(\lambda)) = 0\}.
    \end{equation}
\end{example}

\begin{example}\label{example:weird.boundary.val}
    Let $\alpha,\beta \in \bC \setminus \{0\}$ be such that $|\alpha| + |\beta| = 1$, and let
    \begin{equation*}
        f(z,w) = f_{\alpha, \beta}(z,w) = \frac{zw - \alpha z - \beta w}{1 - \overline{\beta} z - \overline{\alpha} w} \in \cS(\bD^2)
    \end{equation*}
    be as in Example \ref{example:weird.realization}. We noted that $f$ has a unitary realization with a colligation $V = \bsmallmat{A & B \\ C & D}$ such that
    \begin{equation*}
        D^* = \bmat{\overline{\beta} & - \sqrt{\overline{\alpha}}\sqrt{\overline{\beta}} \\ -\sqrt{\overline{\alpha}}\sqrt{\overline{\beta}} & \overline{\alpha}}.
    \end{equation*}
    Using \eqref{eqn:temp}, we get
    $$
    \sigma^{\diag}_{ap}(D^*) = V(zw - \alpha z - \beta w) \supset \cZ_{\overline{\bD^2}}(f).
    $$
    It is also easy to verify that $BP(f,1) = \emptyset$ by noting that
    \begin{equation*}
        |\lambda \mu - \alpha \lambda - \beta \mu| = |1 - \overline{\beta} \lambda - \overline{\alpha} \mu| \foral (\lambda,\mu) \in \bT^2,
    \end{equation*}
    and that $f$ has a non-removable singularity at $\big(\tfrac{|\beta|}{\overline{\beta}},\tfrac{|\alpha|}{\overline{\alpha}}\big) \in V(zw - \alpha z - \beta w) \cap \bT^2$. We therefore conclude that
    \begin{equation*}
        \sigma^{\diag}_{ap}(D^*) \cap \overline{\bD^2} = \cZ_{\overline{\bD^2}}(f) \cup \{\big(\tfrac{|\beta|}{\overline{\beta}},\tfrac{|\alpha|}{\overline{\alpha}}\big)\} = \sigma^{\diag}_p(D^*) \cap \overline{\bD^2}.
    \end{equation*}
\end{example}

\section*{Acknowledgments}
The authors are indebted to Joseph A. Ball, Robert T.W. Martin, John E. McCarthy, and James E. Pascoe for several helpful discussions.


\bibliographystyle{plain}
\bibliography{references}

\end{document}